\documentclass[preprint,12pt]{elsarticle}
\usepackage{hyperref}
\usepackage{graphicx}
\usepackage{listings}
\usepackage{amsmath, amssymb, amsfonts, latexsym,amsthm}
\usepackage[mathscr]{euscript}
\usepackage{wasysym}
\usepackage{graphics}
\usepackage{enumerate}
\usepackage{enumitem}
\usepackage[normalem]{ulem}
\usepackage{graphicx}
\usepackage{xcolor}
\usepackage{bm}
\usepackage{listings}
\usepackage{tikz}
\usetikzlibrary{arrows.meta, bending, decorations.pathmorphing, decorations.pathreplacing, decorations.shapes}
\usepackage{algorithm}
\usepackage[noend]{algpseudocode}

\newtheorem{theorem}{Theorem}[section]
\newtheorem{lemma}[theorem]{Lemma}
\newtheorem{corollary}[theorem]{Corollary}
\newtheorem{definition}{Definition}[section]
\newtheorem{proposition}[theorem]{Proposition}

\algrenewcommand\algorithmicrequire{\textbf{Input:}}
\algrenewcommand\algorithmicensure{\textbf{Output:}}
\newcommand{\dG}{\overrightarrow{G}}

\DeclareMathOperator{\lca}{lca}
\DeclareMathOperator{\qBMG}{qBMG}
\newcommand{\AX}[1]{\textnormal{#1}}
\journal{Discrete Applied Mathematics}

\begin{document}

\begin{frontmatter}
\title{Forbidden configurations and dominating bicliques in undirected 2-quasi best match graphs}

\author[1]{Annachiara Korchmaros\corref{cor1}}
\ead{annachiara.korchmaros@uni-leipzig.de}
\cortext[cor1]{Annachiara Korchmaros}
\author[1,2,3,4,5]{Peter F. Stadler} 
\affiliation[1]{organization={Universität Leipzig},
            addressline={Härtelstr. 16-18},
             city={Leipzig},
             postcode={D-04107},
             country={Germany}}
\affiliation[2]{organization={Max Planck Institute for
  Mathematics in the Sciences},
            addressline={Inselstraße 22}, 
            city={Leipzig},
            postcode={ 04103}, 
            country={Germany}}
\affiliation[3]{organization={University of Vienna},
            addressline={Universitätsring 1}, 
            city={Vienna},
            postcode={1010}, 
            country={Austria}}
\affiliation[4]{organization={Santa Fe Institute},
            addressline={1399 Hyde Park Rd}, 
            city={Santa Fe},
            postcode={87501,}, 
            state={New Mexico},
            country={USA}}
\affiliation[5]{organization={Universidad Nacional de Colombia},
            addressline={Ave Cra 30}, 
            city={Bogotá},
            country={Colombia}}

\begin{abstract}
2-quasi best match graphs (2qBMGs) are directed graphs that capture a
  notion of close relatedness in phylogenetics. Here, we investigate the underlying undirected graph of a 2qBMG (un2qBMG) and show that it contains neither a path $P_l$ nor a cycle $C_l$ of length $l\geq 6$ as an induced subgraph. This property guarantees the existence of specific vertex decompositions with dominating bicliques that provide further insights into their structure.
\end{abstract}

\begin{keyword}
best match graphs \sep
    chordal bipartite graphs \sep
    dominating set
\end{keyword}

\end{frontmatter}
\section{Introduction}
\label{sec:intro}
Relationships between genes that are of key interest in evolutionary biology give rise to vertex colored graphs that are defined in terms of phylogenetic trees that model evolutionary history. This binary relationship can be inferred from data, at least in principle, prompting
the development of a method to infer the trees from the relational data; see
e.g.~\cite{ramirez2024revolutionh,schaller2021corrigendum,korchmaros2021structure,korchmaros2023quasi}.
Best Match Graphs (BMGs) are a starting point for inferring pairs of orthologous genes; see Section~\ref{sec:background} for a formal definition of a BMG. The basic idea is to capture pairs of genes $a$
and $b$ from species $A$ and $B$, respectively, which are evolutionarily
most closely related.  BMGs are directed graphs with a vertex coloring
$\sigma$ assigning to each vertex (gene) the species in which it resides.
Several features of BMGs can be phrased in terms of phylogenetic
trees; see~\cite{schaller2021corrigendum,schaller2021complexity}.  In
particular, certain three-vertex subgraphs in BMGs give rise to binary
trees on three leaves that must or must not be displayed by any tree that
explains these BMGs. This and additional properties yield a classical
characterization of BMGs~\cite[Theorem 9]{schaller2021corrigendum}.
Equivalently, BMGs are precisely the so-called color-sink-free quasi-best
match graphs (qBMGs)~\cite[Theorem~4.3]{korchmaros2023quasi}, i.e. no vertex has an empty out-neighborhood.  In particular, quasi-best match
graphs are a natural generalization of BMGs that allows us to keep only
those best matches that are evolutionary, not too far in the past~\cite{korchmaros2023quasi}. Two-colored qBMGs (2qBMG) are a rather special subclass of qBMGs. They are relevant in practice because every subgraph of a qBMG induced by a pair of colors is a 2qBMG.
While all known characterizations of qBMGs explicitly involve the tree
structure on which the best matches are defined, tree-free
characterizations have been derived for the two-colored case: 
\begin{proposition} \cite{schaller2021corrigendum,korchmaros2023quasi}
\label{pro23112025}
  A directed graph $(\dG,\sigma)$ with a proper two-coloring $\sigma$
    of its vertex set is a 2qBMG if and only if the in-neighborhoods
  $N^-(v)$, $N^-(u)$ and out-neighborhoods $N^+(v)$, $N^+(u)$ of every two
  distinct vertices $u$ and $v$ of $\dG$ satisfy the following three
  conditions:
  \begin{itemize} \label{Ns}
  \item[\texorpdfstring{\AX{(N1)}}{(N1)}] $u\notin N^+(v),\,u\notin N^+(v)$ implies
    $N^+(u)\cap N^+(N^+(v))=N^+(v)\cap N^+(N^+(u))=\emptyset$.
  \item[\texorpdfstring{\AX{(N2)}}{(N2)}] $N^+(N^+(N^+(u))) \subseteq N^+(u)$.
  \item[\texorpdfstring{\AX{(N3)}}{(N3)}] If $N^+(u)\cap N^+(v)\ne\emptyset$ then
    $N^+(u)\subseteq N^+(v)$ or
    $N^+(v)\subseteq N^+(u)$.
  \end{itemize}
\end{proposition}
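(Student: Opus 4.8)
The plan is to prove this biconditional by establishing its two implications separately, working from the definition of a 2qBMG as a properly two-colored digraph $(\dG,\sigma)$ that is \emph{explained} by some leaf-colored phylogenetic tree $T$ on leaf set $V(\dG)$ (together with the data that specifies which best matches are retained; for this we refer to \cite{schaller2021corrigendum,korchmaros2023quasi}), so that $v\in N^+(u)$ holds exactly when $v$ is a retained best match of $u$, a property controlled by the least common ancestor $\lca_T(u,v)$. Because $\sigma$ is proper and uses only two colors, colors alternate along every directed walk: $N^+(u)$ lies in the color class opposite to that of $u$, $N^+(N^+(u))$ lies in the class of $u$, and so on. This parity observation is used throughout: it already shows that (N1), whose hypothesis we read as the non-adjacency of $u$ and $v$, carries content only when $\sigma(u)\ne\sigma(v)$ (and is otherwise automatic), that in (N2) both $N^+(N^+(N^+(u)))$ and $N^+(u)$ lie in the class opposite to $u$, and that a common out-neighbor as in (N3) forces $\sigma(u)=\sigma(v)$.

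For the \emph{necessity} direction, I would fix a tree $T$ explaining $(\dG,\sigma)$ and translate each condition into a statement about the ancestor order $\preceq$ of $T$ and its least common ancestors. For (N3): if $w\in N^+(u)\cap N^+(v)$, then $\lca_T(u,w)$ and $\lca_T(v,w)$ are both ancestors of $w$, hence $\preceq$-comparable, and whichever lies lower determines, via the minimality built into the best-match relation, that the corresponding out-neighborhood contains the other. For (N2): a directed path $u\to x\to y\to z$ supplies one lca-inequality per arc; combining these with the three-leaf identity for least common ancestors (among $\lca_T(a,b)$, $\lca_T(b,c)$, $\lca_T(a,c)$ the $\preceq$-highest is attained at least twice) and using that $u,y$ and $x,z$ are monochromatic pairs, one checks that $z$ is a retained best match of $u$, i.e. $z\in N^+(u)$. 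For (N1): arguing contrapositively, suppose $w\in N^+(u)\cap N^+(N^+(v))$ with a witness $v\to x\to w$, $u\to w$; then $u,x$ share a color and $v,w$ share a color, and comparing $\lca_T(u,x)$, $\lca_T(v,x)$, $\lca_T(u,w)$ and $\lca_T(x,w)$ forces $\lca_T(u,v)$ to be admissible with $u$ a best match of $v$ or $v$ a best match of $u$, that is, $u\in N^+(v)$ or $v\in N^+(u)$; the symmetric half of (N1) follows by exchanging $u$ and $v$.

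\emph{Sufficiency} is the substantial direction: from (N1)--(N3) one must \emph{build} an explaining tree, and I would do this by induction on $|V(\dG)|$, the base case being immediate. For the inductive step, invoke (N3): the out-neighborhoods of all vertices of one fixed color form a laminar family of subsets of the opposite color class, and likewise with the two colors interchanged, so reading the two laminar families together yields a hierarchy on $V(\dG)$ whose maximal proper members are the vertex sets that will hang below the root of $T$. The step then has three ingredients: (i) each such part again satisfies (N1)--(N3), so the induction hypothesis supplies a tree explaining it; (ii) grafting these subtrees below a common new root recreates every arc of $\dG$, where (N2) is exactly what prevents out-neighborhoods from being ``too small'' and hence guarantees that every required best match in the assembled tree is realized; and (iii) the grafting introduces no arc absent from $\dG$, which is precisely what (N1) forbids, since a spurious cross-part best match would realize the forbidden configuration of (N1). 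Once $T$ is built and checked to explain $(\dG,\sigma)$, the latter is a 2qBMG by definition. I expect the crux to be ingredient (iii) together with the choice of decomposition in the inductive step: the hierarchy must be cut finely enough that (N1) governs every cross pair, yet coarsely enough that (N1)--(N3) are inherited by each part and no arc of $\dG$ is split apart; reconciling these competing demands, along with the case distinctions forced by the connectedness of $(\dG,\sigma)$ and by which colors occur in each part, is where the detailed analysis of \cite{schaller2021corrigendum,korchmaros2023quasi} is needed.
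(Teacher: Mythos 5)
This proposition is quoted from \cite{schaller2021corrigendum,korchmaros2023quasi} and the paper gives no proof of it, so there is no in-paper argument to compare yours against; your outline does broadly follow the strategy of the cited literature (necessity via least-common-ancestor comparisons in an explaining tree, sufficiency via an inductive tree construction driven by the laminar structure that (N3) imposes on out-neighborhoods). As a proof, however, what you have written is a plan rather than an argument: the sufficiency direction, which you yourself identify as the substantial one, is left at the level of ``ingredients (i)--(iii)'' with the decisive verifications explicitly deferred to the references. In particular the claims that (N2) guarantees every required best match is realized and that (N1) excludes every spurious cross-part arc are exactly the statements that need proof, and they are asserted, not established.

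There is also a concrete gap specific to the \emph{quasi}-BMG setting. A 2qBMG is explained by a pair: a leaf-colored tree \emph{and} a truncation map $u_T$, and it is the truncation map that distinguishes qBMGs from BMGs (a 2qBMG may have vertices with empty out-neighborhood, which no BMG has). Your sufficiency step constructs only a tree $T$ and then says ``once $T$ is built and checked to explain $(\dG,\sigma)$''; without also constructing $u_T$, the tree you build explains a BMG, and arcs of that BMG that are absent from $\dG$ can only be removed by choosing $u_T$ appropriately. Your phrasing even points the wrong way here: in the qBMG case one does \emph{not} need every best match of the assembled tree to be realized as an arc; one needs every arc to be a best match and then needs to exhibit a truncation map that discards precisely the best matches that are not arcs, compatibly across all colors and all vertices. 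Similarly, in the necessity direction your lca comparisons for (N1)--(N3) never invoke the thresholds $u_T(x,s)$, yet membership in $N^+(x)$ depends on them, so for instance the (N3) argument ``whichever lca lies lower determines containment of the other out-neighborhood'' is incomplete: one must also check that the witnesses survive the truncation. Addressing the truncation map in both directions is the missing idea.
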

Properties (N1) and (N2) may be rephrased as
in~\cite{korchmaros2021structure}:
\begin{itemize} 
\item[(N1)] A (N1)-\emph{configuration} $[u,t,w,v]$ consists of four pairwise distinct vertices $u,t,w,v$ such that $u,v$ are independent but $ut$, $vw$, $tw$ are edges. Condition (N1) means that $(\dG,\sigma)$ contains no (N1)-configuration.
\item[(N2)] If $uv$, $vw$, and $wt$ are edges, then $ut$ is also an edge.
\end{itemize}
Directed graphs satisfying (N2) are sometimes called bi-transitive.

The 2qBMGs form a class of bipartite properly vertex-colored directed graphs, since every digraph with both properties (N1) and (N2) is bipartite; see~\cite[Theorem 7.9]{korchmaros2023quasi}.
  
The subgraphs of a BMG comprising all its pairs of symmetric edges (i.e., 2-cycles) are the reciprocal BMGs, which are of key interest in studying gene family histories~\cite{hellmuth2020complexity}. 
  From a mathematical point of view, it is a natural idea to consider the
underlying undirected graphs of BMGs and qBMGs, aiming to exploit profound
results on undirected graphs to gain new insights into 2qBMGs. Here, we are
particularly interested in forbidden configurations and edge
decompositions, as well as in algorithms and computational complexity. However, we note that the relationships between the classes of digraphs
$\overrightarrow{G}$ and their underlying undirected graph $G$ are far from
obvious. For example, 2qBMGs and un2qBMGs form hereditary classes of
graphs; see Proposition~\ref{prop:un2qbmg_hereditary} and
\cite{korchmaros2023quasi}. However, the characterization of 2qBMGs by the
forbidden induced subgraphs,
see~\cite[Theorem~4.4]{schaller2021complexity}, does not provide an
analogous characterization of un2qBMGs. The reason is that the underlying
undirected graph of a forbidden induced subgraph of 2qBMGs is not
necessarily a forbidden subgraph of the undirected version. Moreover,
  if there are no vertices with the same in-neighbors and out-neighbors,
  2qBMGs may have a large symmetry group, while 2BMGs only have a few
  symmetries; see~\cite{KADAM}. The relationships between
$\overrightarrow{G}$ and $G$ are thus far from trivial.

Our main results in this direction are Theorems~\ref{thm:P6free} and
\ref{thm:odd-even}. The first result, which establishes that $P_6$ and
$C_6$ (and thus also all cycles of length $>6$) are forbidden induced
subgraphs for un2qBMGs, was announced in the extended
abstract~\cite{AK2024ITAT}. This result is sharp in the sense that a
un2qBMG may have an induced path $P_5$ (and hence induced $P_4$ as well);
see Propositions~\ref{prop:P5} and~\ref{thm:P4-cases}. A graph is
\emph{chordal bipartite} if it is bipartite and does not contain an induced
cycle of length at least six. Un2qBMGs are chordal bipartite
graphs. However, the converse is not true; see
Corollary~\ref{cor:strictly_P6_C6}.  Recognition and optimization problems
for $P_6$-free chordal bipartite graphs, $(P_6,C_6)$-free bipartite graphs
have been studied by several authors, following the
paper~\cite{fouquet1999bipartite}. The special structure of
$(P_6,C_6)$-free graphs was observed to be compatible with certain vertex
decompositions involving $K\oplus S$ graphs, where a $K\oplus S$ stands for
a graph whose vertex set has a partition into a biclique $K$ and a stable
set $S$ (see Section~\ref{sec:background}). This, in turn, allows
$(P_6,C_6)$-free bipartite graphs to be recognized in linear
time~\cite{Takaoka:23}. The strong relationship between $P_k$-freeness and
dominating subsets in undirected graphs has been known for a long time; see
the seminal paper~\cite{bacsotuza1} by Bacs{\'o} and Tuza.  Two very recent
papers on $(P_6,C_6)$-free graphs involve $K\oplus S$ subgraphs.
In~\cite{quaddoura2024bipartite}, it is shown that a bipartite graph $G$ is
$(P_6,C_6)$-free if and only if every connected subgraph of $G$ is a
$K\oplus S$ graph.  In~\cite{Takaoka:23}, $(P_6,C_6)$-free graphs are
characterized as those with a canonical decomposition comprising only
parallel and $K\oplus S$-steps. This motivates the study of such
decompositions for un2qBMGs and their relationship to 2qBMGs. Our main
contribution in this direction is Theorem~\ref{thm:connected-typeA}
  which states that the underlying undirected graph $G$ of a 2qBMG admits a
  decomposition $K\oplus S$. Moreover, if $\dG$ is a 2qBMG satisfying a
  mild condition $(*)$, then Corollary~\ref{cor:consistent_orientation}
     shows the existence of an induced acyclic-oriented
  subgraph $\overrightarrow{K}$ of $\dG$ such that the underlying
  undirected graph $K$ of $\overrightarrow{K}$ is a dominating biclique of
  the underlying undirected graph $G$ of $\dG$.

\section{Background}\label{sec:background}
Our notation and terminology for undirected graphs are standard.  Let $G$
be an undirected graph with vertex set $V(G)$ and edge set $E(G)$.  A
\emph{$P_n$ path-graph} is an undirected graph with
$V(G)=v_1,v_2,\ldots,v_n$ and edges $\{v_iv_{i+1}\mid 1\leq i<n\}$.  An
undirected graph is \emph{$P_n$-free} if it has no induced subgraph
isomorphic to a $P_n$ path-graph; equivalently, we say that $P_n$ is a
forbidden subgraph for $G$. A $C_n$ \emph{cycle-graph} is an undirected
graph with $V(G)=v_1,v_2,\ldots,v_n$ and any edge in $E(G)$ is either
$v_iv_{i+1}$ for some $1\le i \le n-1$, or $v_nv_1$. For $n$ even, a $C_n$
cycle-graph is a bipartite graph with (uniquely determined) bipartition
classes $\{v_1,v_3,\ldots\}$ and $\{v_2,v_4, \ldots\}$. Bipartite
cycle-graphs can only exist for even $n$. An undirected graph is
$C_n$-\emph{free} if it has no induced subgraph isomorphic to an $C_n$
cycle-graph; equivalently, we say that $C_n$ is a forbidden subgraph for
$G$. 
Graph properties preserved by induced subgraphs, i.e., hereditary properties, admit a characterization by means of forbidden induced
subgraphs (even though there might be infinitely many forbidden induced
subgraphs).

A subset $D \subseteq V(G)$ is called a \emph{dominating set} of an
undirected graph $G$ if every vertex $u \in V(G)\setminus D$ has a neighbor
$v \in D$, i.e., $uv \in E(G)$.  In this case, we also say that $D$
\emph{dominates} $G$. A \emph{stable set} (also called an \emph{independent
set}) in a graph $G$ is a subset of vertices no two of which are adjacent. A \emph{biclique} of $G$ is an induced subgraph of $G$ which is isomorphic to a complete bipartite graph.  
An undirected bipartite graph $G$ is said to be of type $K \oplus S$ if either $G$ is
degenerate (that is, it contains an isolated vertex), or if the vertex set admits a partition $V(G) = K \cup S$ where $K$ is a biclique and $S$
is an independent set;
see~\cite{fouquet1999bipartite,quaddoura2024bipartite}.

For a directed graph $\dG$ with vertex set $V(\dG)$ and edge set $E(\dG)$,
we adopt the notation and terminology
of~\cite{korchmaros2023quasi}. Specifically, for $v \in V(\dG)$, we write
$N^+(v)$ and $N^-(v)$ for the sets of out-neighbours and in-neighbours
of~$v$, respectively. Two vertices $v,w$ of a digraph are \emph{equivalent} if $N^+(v)=N^+(w)$ and $N^-(v)=N^-(w).$
We refer to the two edges of a $2$-cycle as a pair of symmetric edges.

A digraph $\dG$ is \emph{oriented} if $uv \in E(\dG)$ implies
$vu \notin E(\dG)$ for all $u,v \in V(\dG)$. An oriented digraph admits a
\emph{topological vertex ordering} if its vertices can be labeled $v_1,
v_2, \ldots$ such that whenever $v_i v_j \in E(\dG)$ we have $i < j$. A
necessary and sufficient condition for the existence of a topological
ordering is that $\dG$ is \emph{acyclic}, i.e., that it contains no
directed cycle. The \emph{underlying undirected graph}
of a digraph $\dG$ is the undirected graph
$G$ on the same vertex set, where the edge set is
$\bigl\{uv: uv \in E(\dG) \text{ or } vu \in E(\dG) \bigr\}.$ A digraph $\dG$ is \emph{bipartite} if its underlying undirected graph $G$ is bipartite.

An \emph{orientation} $\overrightarrow{\Gamma}$ of a digraph $\dG$ is the digraph obtained from $\dG$ by removing exactly one edge from each pair of symmetric edges; in particular, if $\dG$ has no pair of symmetric edges, then $\dG$ has just one orientation, which coincides with $\dG$. Then we say that an orientation $\overrightarrow{\Gamma}$ of $\dG$ is \emph{consistent} if it preserves the equivalence between any two vertices, i.e., any two vertices of such digraphs are equivalent in $\overrightarrow{\Gamma}$ if and only if they are equivalent in $\dG$.

An oriented bipartite digraph is called a
\emph{bitournament} if for any two vertices $u,v \in V(\dG)$ with different
colors, either $uv \in E(\dG)$ or $vu \in E(\dG)$. 
Every bi-transitive bitournament is acyclic-oriented; see~\cite[Theorem 2.5]{DGGS}.

A directed graph is
\emph{connected} if its underlying undirected graph is connected.  A
directed graph is said to be $P_n$-\emph{free} or $C_n$-\emph{free} if its 
underlying undirected graph is $P_n$-free or $C_n$-free, respectively; equivalently,
we say that $C_n$ and $P_n$ are forbidden subgraphs for $\dG$.

A \emph{tree} is an undirected, connected graph that does not contain cycles. A vertex $v$ of a tree is a \emph{leaf} if $v$ has degree $1$,  otherwise $v$ is an \emph{inner} vertex.  
Let $T$ be a tree with leaf set $L:= L(T)\subset V(T)$. 
A \emph{leaf coloring} of $T$ is a map $\sigma\colon L\to S$ where $S$ is a non-empty set of \emph{colors}. A vertex $v$ of $T$ is a \emph{child} of $u$ if $uv\in E(T)$. If $v$ is a child of $u$, then $u$ is the (unique) \emph{parent} of $v$. If $v$ has no parent, then it is a \emph{root}. A tree is \emph{rooted} if it has a unique root $\rho$. A tree $T$ is \emph{phylogenetic} if every vertex of $T$ is either a leaf or it has at least two children. All trees appearing in this contribution are assumed to be rooted and phylogenetic. A vertex $u\in V(T)$ is an \emph{ancestor} of $v\in V(T)$ if $u$ lies on the path from $\rho$ to $v$. In this case, we write $v\preceq_T u$. Then the vertex set of $T$ w.r.t. relation $\preceq_T$ is a partially ordered set. If $uv\in E(T)$, then we write $v\prec_T u$. The \emph{least common ancestor} $\lca_{T}(A)$ is the unique $\preceq_T$-smallest vertex that is
an ancestor of all vertices in $A\subseteq V$. For brevity, we write
$\lca_{T}(x,y)$ instead of $\lca_{T}(\{x,y\})$. 
Let $(T,\sigma)$ be a leaf-colored tree with vertex set $V$, leaf set $L$ and $\sigma(L) \subseteq S$.

A leaf $y\in L(T)$ is a
  \emph{best match} of the leaf $x\in L(T)$ if $\sigma(x)\neq\sigma(y)$ and
  $\lca(x,y)\preceq_T \lca(x,y')$ holds for all leaves $y'$ of color
  $\sigma(y')=\sigma(y)$.

 A \emph{truncation map}
  $u_T\colon L\times S\to V$ assigns to every leaf $x\in L$ and color
  $s\in S$ a vertex of $T$ such that $u_T(x,s)$ lies along the unique path
  from $\rho_T$ to $x$ and $u_T(x,\sigma(x))=x$. A leaf $y \in L$ with
  color $\sigma(y)$ is a \emph{quasi-best match for $x\in L$}
  (w.r.t. $(T,\sigma)$ and $u_T$) if both conditions (i) and (ii) are
  satisfied:
  \begin{itemize}
  \item[(i)] $y$ is a \emph{best match} of $x$ in $(T,\sigma)$.
  \item[(ii)] $\lca_T(x,y) \preceq u_T(x,\sigma(y))$.
  \end{itemize}
  The vertex-colored digraph on the vertex set $L$
  whose edges are defined by the quasi-best matches is the \emph{quasi-best
  match graph} (qBMG) of $(T,\sigma,u_T)$.
 If $u_T(x,\sigma)=\rho$ for every $x\in L$, then the associated qBMG is a \emph{best match graph} (BMG). 
  
\begin{definition}
A vertex-colored digraph $(\dG,\sigma)$ with vertex set $L$ is a
\emph{colored quasi-best match graph} (qBMG) if there is a
leaf-colored tree $(T,\sigma)$ equipped with truncation map $u_T$ on
$(T,\sigma)$ such that $(\dG,\sigma) = \qBMG(T,\sigma,u_T)$.
\end{definition}

The two-colored qBMGs (2qBMGs) are characterized in terms of directed bipartite graphs; see Proposition \ref{pro23112025}. 
The vertex-coloring determines the bipartition of the vertex set, and we will refer to the bipartition
classes of a 2qBMG as color classes. A 2qBMG is {\emph{degenerate}} if it
has an isolated vertex. We stress that a non-degenerate 2qBMG may be
{\emph{trivial}}, as it may be the union of pairwise disjoint edges and $2$-cycles. If this is the case, then (N1), (N2), and (N3) trivially
hold in the sense that none of the pre-conditions in (N1), (N2), and (N3)
is satisfied.

\section{Forbidden induced paths of underlying undirected 2qBMGs}
\label{secindu}
Let $(\dG,\sigma)$ be a qBMG. The underlying undirected graph of a qBMG $(\dG,\sigma)$ is named \emph{unqBMG} and denoted by $(G,\sigma)$. In particular, un2qBMGs are bipartite graphs since 2qBMGs are bipartite, and the bipartition of the vertex set $V(G)$ coincides with the vertex-coloring $\sigma$. Recall that 2qBMGs form a hereditary family of directed graphs and are characterized by forbidden subgraphs~\cite{korchmaros2023quasi}.  We start the section by showing that un2qBMGs also form a hereditary family of graphs. This motivates us to look for forbidden configurations for the family of
un2qBMGs. Therefore, we continue the section by proving that un2qBMGs are $P_6$-free and chordal bipartite.

\begin{proposition}\label{prop:un2qbmg_hereditary}
  Every induced subgraph of an un2qBMG is an un2qBMG, i.e., the underlying undirected graphs of 2-qMBGs form a hereditary graph class.
\end{proposition}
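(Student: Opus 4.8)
The plan is to reduce the statement to the fact recalled just above (from \cite{korchmaros2023quasi}) that the class of 2qBMGs is hereditary as a class of \emph{directed} graphs, together with the elementary observation that the operation ``pass to the underlying undirected graph'' commutes with the operation ``pass to an induced subgraph.'' Concretely: let $(G,\sigma)$ be an un2qBMG, so by definition there is a 2qBMG $(\dG,\sigma)$ whose underlying undirected graph is $(G,\sigma)$; fix $W\subseteq V(G)=V(\dG)$ and let $H:=G[W]$ be the induced subgraph we wish to recognize as an un2qBMG.

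First I would invoke heredity of 2qBMGs to conclude that $\dG[W]$, with the restricted proper coloring $\sigma|_W$, is again a 2qBMG. (If one prefers a self-contained argument, this can be read directly off Proposition~\ref{pro23112025}: conditions (N1) and (N2) are inherited verbatim by induced subdigraphs, and (N3) is inherited because $N^+_{\dG[W]}(u)=N^+_{\dG}(u)\cap W$, so a nonempty common out-neighbourhood in $\dG[W]$ forces one in $\dG$, and the inclusion $N^+_{\dG}(u)\subseteq N^+_{\dG}(v)$ survives intersection with $W$.)

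Second — the only point needing a line of verification — I would check that the underlying undirected graph of $\dG[W]$ is exactly $H$. An edge $uv$ lies in the underlying undirected graph of $\dG[W]$ if and only if $u,v\in W$ and $\{uv,vu\}\cap E(\dG)\ne\emptyset$, i.e. if and only if $u,v\in W$ and $uv\in E(G)$, i.e. if and only if $uv\in E(G[W])=E(H)$. Hence $H$ is the underlying undirected graph of a 2qBMG, so $H$ is an un2qBMG. Since $W$ was arbitrary, this proves that un2qBMGs form a hereditary graph class, and the ``i.e.'' reformulation in the statement is immediate.

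I do not expect a substantive obstacle here: the real content is entirely carried by the already-established heredity of 2qBMGs, and the remaining work is the commutation of the two operations. The only thing to keep an eye on is the handling of trivial/degenerate edge cases — e.g.\ $W=\emptyset$ or $W$ monochromatic, in which case $H$ is edgeless and is vacuously an un2qBMG — but these require no real argument.
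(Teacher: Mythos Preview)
Your proposal is correct and follows essentially the same approach as the paper: invoke heredity of 2qBMGs (the paper cites \cite[Corollary~3.6]{korchmaros2023quasi}) and observe that taking the underlying undirected graph commutes with taking induced subgraphs. You supply more detail on the commutation step and offer an optional self-contained check of (N1)--(N3), but the argument is the same.
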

\begin{proof}
  Let $G$ be the underlying undirected graph of a 2qBMG $\dG$. Observe that the induced subgraph $G'$ of $G$ by a subset of vertices $V'$ with $V'\subseteq V(G)$ is the underlying undirected graph of the induced subgraph $\dG'$ of $\dG$ by $V'$. From~\cite[Corollary 3.6]{korchmaros2023quasi} $G'$ is a 2qBMG, hence $G'$ is a un2qBMG.
 \end{proof}
The following theorem is the main result of this contribution.

\begin{theorem}\label{thm:P6free}
  Every un2qBMG is $P_6$-free and $C_6$-free.
\end{theorem}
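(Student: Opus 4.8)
The plan is to argue by contradiction using the tree-free characterisation of Proposition~\ref{pro23112025}. Since being $P_6$-free (resp.\ $C_6$-free) is an induced-subgraph property and un2qBMGs form a hereditary class (Proposition~\ref{prop:un2qbmg_hereditary}), it suffices to show that no 2qBMG $(\dG,\sigma)$ whose underlying graph $G$ equals $P_6$ or $C_6$ can satisfy (N1)--(N3). So I would fix such a $\dG$, label its six vertices $v_1,\dots,v_6$ along the path (resp.\ cycle), let $e_i$ denote the edge $v_iv_{i+1}$ (indices modulo $6$ in the cycle case), and record for each $e_i$ which arcs of $\dG$ it carries: write $R$ if $e_i$ carries only $v_i\to v_{i+1}$, write $L$ if it carries only $v_{i+1}\to v_i$, and call $e_i$ a \emph{symmetric pair} if it carries both. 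The goal is to show $(N1)$--$(N3)$ are jointly inconsistent on $\dG$.

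The first step is to collect a few elementary consequences of (N1) and (N2). (a) Since $G$ is an induced path or cycle, a directed path $x\to y\to z\to w$ on four of its vertices would force the non-edge $xw$ by (N2); hence among any three consecutive edges of $G$ not all can carry a ``forward'' arc and not all can carry a ``backward'' arc. (b) For an induced $P_4$ of $\dG$ with vertices $a,b,c,d$ in path order, condition (N1) forbids the arc patterns $a\to b\to c\leftarrow d$ and $a\to b\leftarrow c\leftarrow d$ (the four displayed vertices form an (N1)-configuration in each case); together with (a), the only surviving orientations of an induced $P_4$ are $a\to b\leftarrow c\to d$, $a\leftarrow b\to c\to d$, $a\leftarrow b\to c\leftarrow d$ and $a\leftarrow b\leftarrow c\to d$ --- so in the $R/L$ shorthand the admissible triples for three consecutive simple edges are exactly $RLR$, $LRR$, $LRL$, $LLR$. (c) If the middle edge $bc$ of an induced $P_4$ on $a,b,c,d$ is a symmetric pair, then (a) and (b) force the two outer edges to be simple arcs pointing away from the middle, i.e.\ $b\to a$ and $c\to d$ (in particular two symmetric pairs cannot share a vertex). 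Finally, I would record the consequence of (N3) that closes the argument: if five consecutive vertices $v_j,\dots,v_{j+4}$ of $G$ carry the arcs $v_{j+1}\to v_j$, $v_{j+1}\to v_{j+2}$, $v_{j+3}\to v_{j+2}$, $v_{j+3}\to v_{j+4}$, then $v_{j+1}$ and $v_{j+3}$ have the common out-neighbour $v_{j+2}$, while $v_j\in N^+(v_{j+1})\setminus N^+(v_{j+3})$ and $v_{j+4}\in N^+(v_{j+3})\setminus N^+(v_{j+1})$ (both non-adjacencies being imposed by $G$), so $N^+(v_{j+1})$ and $N^+(v_{j+3})$ are incomparable, contradicting (N3); I call such a pattern a \emph{zigzag}.

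For $G=P_6$ I would first rule out symmetric pairs among $e_1,\dots,e_5$, each case being a short propagation. If $e_1$ is a symmetric pair, then its forward arc $v_1\to v_2$ together with (b), applied successively to the induced $P_4$'s $v_1v_2v_3v_4$ and $v_3v_4v_5v_6$, forces $e_2,e_3,e_4,e_5$ to be simple and to carry $v_3\to v_2$, $v_3\to v_4$, $v_5\to v_4$, $v_5\to v_6$ respectively --- a zigzag on $v_2,\dots,v_6$, contradicting (N3). If $e_2$ is a symmetric pair, (c) makes $e_3$ the simple arc $v_3\to v_4$, and then the forward arc $v_2\to v_3$ of $e_2$ gives an induced $P_4$ $v_2v_3v_4v_5$ whose first two arcs are both forward, which is not admissible by (b). If $e_3$ is a symmetric pair, (c) makes $e_2$ the simple arc $v_3\to v_2$, and then the backward arc $v_4\to v_3$ of $e_3$ gives an induced $P_4$ $v_1v_2v_3v_4$ whose last two arcs are both backward, again inadmissible by (b). The cases $e_4$ and $e_5$ symmetric reduce to $e_2$ and $e_1$ via the path-reversing relabelling $v_i\mapsto v_{7-i}$, under which being a 2qBMG is preserved. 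Hence all $e_i$ are simple, and (b) applied to the three induced $P_4$'s of the path leaves only the four orientation sequences $RLRLR$, $LRLRR$, $LRLRL$, $LLRLR$ for $(e_1,\dots,e_5)$ (a short finite check), each of which contains a zigzag on some five consecutive vertices --- contradicting (N3).

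For $G=C_6$ the argument is parallel. By the rotational symmetry of the cycle I may assume that $e_1$ is a symmetric pair if any edge is; then (c) applied to the induced $P_4$ $v_6v_1v_2v_3$ makes $e_6$ and $e_2$ the simple arcs $v_1\to v_6$ and $v_2\to v_3$, and the forward arc $v_1\to v_2$ of $e_1$ then gives an induced $P_4$ $v_1v_2v_3v_4$ whose first two arcs are both forward, inadmissible by (b). So every edge of $C_6$ is simple; (b) now forbids two consecutive forward arcs and two consecutive backward arcs, so the orientation sequence must strictly alternate, and an alternating sequence around a $6$-cycle contains a zigzag on five consecutive vertices --- contradicting (N3) once more. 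This would prove the theorem. The hard part is precisely the symmetric-pair analysis: an orientation-pattern argument only constrains simple arcs, so one must carefully track which arcs a symmetric pair contributes and check, in each of its possible positions, that the propagation produces either a forbidden induced-$P_4$ orientation from (N1)/(N2) or the zigzag excluded by (N3); getting facts (b) and (c) exactly right --- in particular the asymmetry that makes precisely $RLR,LRR,LRL,LLR$ admissible --- is what makes the bookkeeping close up.
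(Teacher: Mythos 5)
Your proposal is correct and takes essentially the same approach as the paper: a direct case analysis of the arc orientations along the path/cycle using (N1)--(N3), where the contradictions are exactly the paper's (forbidden oriented $P_4$ patterns from (N1)/(N2), and the alternating pattern $v_{j+1}\to v_j$, $v_{j+1}\to v_{j+2}$, $v_{j+3}\to v_{j+2}$, $v_{j+3}\to v_{j+4}$ violating (N3)). You merely organize the same case analysis more systematically, by first isolating the admissible $P_4$ orientations and the symmetric-pair positions and then enumerating the surviving orientation words, whereas the paper branches on the arcs incident to $v_2$.
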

\begin{proof}
Let $\overrightarrow{G}$ denote a 2qBMG with at least six vertices. Assume, on the contrary, that its underlying undirected graph $G$ has an induced
subgraph $G_6$ on six vertices $v_1,v_2,v_3,v_4,v_5,v_6$ such that
$v_1v_2v_3v_4v_5v_6$ is a $P_6$ path-graph. Then $G_6$ contains no edge
other than $v_iv_{i+1}$ for $i=1,\ldots,5$. Four cases arise according to
the possible patterns of the neighborhood of $v_2$ in $\dG$.

(i): $v_1v_2,v_3v_2\in E(\dG)$. Then $v_3v_4\in E(\dG),$ otherwise
$[v_4,v_3,v_2,v_1]$ is a (N1)-configuration.  If $v_4v_5\in E(\dG)$ then
$v_6v_5\in E(\dG)$, otherwise $v_3v_4v_5v_6$ violates (N2). On the other
hand, $v_4v_5,v_6v_5\in E(\dG)$ violates (N1), as $[v_3,v_4,v_5,v_6]$ is a
(N1)-configuration. Hence $v_5v_4\in E(\dG)$. Similarly, if $v_6v_5\in
E(\dG)$ then $[v_6,v_5,v_4,v_3]$ is a (N1)-configuration. 
Therefore, $v_1v_2,v_3v_2,v_3v_4,v_5v_4,v_5v_6\in E(\dG)$, as shown in
Figure~\ref{fig:p5}(b). But then $\{v_2,v_3,v_4,v_5,v_6\}$ 
violates (N3) with respect to $v_3$ and $v_5$.

(ii): $v_1v_2,v_2v_3\in E(\dG)$. Then $v_4v_3\in E(\dG)$, otherwise
$v_1v_2v_3v_4$ violates (N2). On the other hand, $v_4v_3\in E(\dG)$ yields
that $[v_1,v_2,v_3,v_4]$ is a (N1)-configuration.

(iii): $v_2v_1,v_2v_3\in E(\dG)$. Suppose $v_4v_3\in E(\dG)$. Then either $[v_5,v_4,v_3,v_2]$ is a (N1)-configuration, or (N3) is
  violated by $\{v_2,v_4\}$, depending on whether $v_5v_4\in E(\dG)$,
    or $v_4v_5\in E(\dG)$.  Hence $v_3v_4\in E(\dG)$.  For $v_5v_4\in
  E(\dG)$, $[v_2,v_3,v_4,v_5]$ is a (N1)-configuration, while for
  $v_4v_5\in E(\dG)$, $\{v_2,v_3,v_4,v_5\}$ violates (N2).

(iv): $v_2v_1,v_3v_2\in E(\dG)$. Then $v_3v_4\in E(\dG)$, otherwise
$v_4v_3v_2v_1$ violates (N2). If $v_5v_4\in E(\dG)$, then either
  $\{v_3,v_5\}$ violates (N3), or $[v_6,v_5,v_4,v_3]$ is an
  (N1)-configuration, according as $v_5v_6\in E(\dG)$, or $v_6v_5\in
  E(\dG)$. Hence $v_4v_5\in E(\dG)$. If $v_6v_5\in E(\dG)$ then $[v_3,v_4,v_5,v_6]$ is an (N1)-configuration, a contradiction. On the other hand, if $v_5v_6\in E(\dG)$, $v_3v_4v_5v_6$ violates (N2).
  
In conclusion, $G$ is $P_6$-free because none of the four cases applies.

Assume that $G$ has an induced subgraph $G_6$ on six vertices
$v_1,v_2,v_3,v_4,$ $v_5,v_6$ such that $v_1v_2v_3v_4v_5v_6v_1$ is a $C_6$,
i.e., a hexagon graph. Since the arguments above that rule out a $P_6$
graph involve neither $v_1v_6$ nor $v_6v_1$, they also imply that $G$ is
$C_6$-free.
\end{proof}

Notice that a $(P_6,C_6)$-free bipartite graph $G$ is chordal bipartite. In fact, it cannot
contain an induced $C_6$ (by definition) and no induced $C_k$ with $k>7$, since each such induced cycle contains an induced  $P_6$. 
\begin{corollary}\label{cor:chordal_bip}
The class of underlying undirected graphs of 2qBMGs is contained in the class of $P_6$-free
 chordal bipartite graphs.
\end{corollary}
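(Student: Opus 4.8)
The plan is to obtain the statement as a short corollary of Theorem~\ref{thm:P6free} together with the elementary remark recorded just above. First I would observe that every un2qBMG is bipartite: 2qBMGs are bipartite because every digraph satisfying (N1) and (N2) is bipartite, as noted after Proposition~\ref{pro23112025}, and the underlying undirected graph carries the same bipartition. Consequently every induced cycle of an un2qBMG has even length.

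Next I would invoke Theorem~\ref{thm:P6free}, which already gives $P_6$-freeness and $C_6$-freeness, and upgrade $C_6$-freeness to chordal bipartiteness by ruling out every induced cycle $C_{2k}$ with $2k \geq 8$. For such a cycle, choose any six consecutive vertices along it; the subgraph they induce contains the five edges of a $P_6$, and no further edge, since the only possible additional edge would join the two endpoints of this path, which lie at distance at least four on a chordless cycle. Hence that cycle would contain an induced $P_6$, contradicting Theorem~\ref{thm:P6free}. Therefore an un2qBMG has no induced cycle of length at least six, so it is chordal bipartite; being also $P_6$-free, it belongs to the class of $P_6$-free chordal bipartite graphs, which is the claim.

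I do not anticipate any genuine obstacle: the substance lies entirely in Theorem~\ref{thm:P6free}, and the remaining step is the standard observation that a long induced cycle hosts an induced $P_6$. The only point worth spelling out is why six consecutive vertices of an induced $C_{2k}$ with $2k \geq 8$ induce exactly a $P_6$ rather than a path carrying a chord, and this is immediate from the cycle being chordless.
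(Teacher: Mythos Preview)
Your proposal is correct and follows exactly the route the paper takes: it records, just before the corollary, that a $(P_6,C_6)$-free bipartite graph is chordal bipartite because any induced $C_k$ with $k\ge 8$ contains an induced $P_6$, and then states the corollary without further proof. Your write-up simply makes this observation explicit and adds the (immediate) justification that six consecutive vertices on a chordless cycle of length at least eight induce precisely a $P_6$.
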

In view of Corollary~\ref{cor:chordal_bip}, it is natural to ask
  whether the class of un2qBMGs is strictly contained in the class of
  $(P_6,C_6)$-free graphs.  The next theorem shows that this is indeed the
  case.
\begin{theorem}\label{thm:extra_forbidden}
  The Sunlet$_4$ graph in Figure~\ref{fig:sunlet4+} is a forbidden subgraph for the
  class of un2qBMGs.
\begin{figure}[ht]
  \centering
\begin{tikzpicture}[x=1cm, y=1cm, rotate=45,
    fulldot/.style={circle, fill=black, inner sep=2pt},
    emptydot/.style={circle, draw=black, fill=white, inner sep=2pt}
  ]
  \node[label=above:{$v_3$},fulldot]   (v1) at (0:1)    {};
  \node[label=above:{$v_2$},emptydot]  (v2) at (90:1)   {};
  \node[label=below:{$v_5$},fulldot]   (v3) at (180:1)  {};
  \node[label=below:{$v_8$},emptydot] (v4) at (270:1)  {};
  
  \node[label=above:{$v_4$},emptydot] (u1) at (0:2)   {};
  \node[label=above:{$v_1$},fulldot]   (u2) at (90:2)  {};
  \node[label=below:{$v_6$},emptydot]  (u3) at (180:2) {};
  \node[label=below:{$v_7$},fulldot]  (u4) at (270:2) {};
  
  
  \draw (v1) -- (v2) -- (v3) -- (v4) -- (v1);
  
  \draw (v1) -- (u1);
  \draw (v2) -- (u2);
  \draw (v3) -- (u3);
  \draw (v4) -- (u4);
  
\end{tikzpicture}
\caption{A $(P_6,C_6)$-free graph that is not an un2qBMG.}
  \label{fig:sunlet4+}
\end{figure}
\end{theorem}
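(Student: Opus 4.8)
The plan is to argue by contradiction, in the style of the proof of Theorem~\ref{thm:P6free}. Suppose some 2qBMG $\overrightarrow{G}$ has the Sunlet$_4$ graph $G_8$ of Figure~\ref{fig:sunlet4+} as an induced subgraph of its underlying undirected graph; by Proposition~\ref{prop:un2qbmg_hereditary} we may assume the underlying graph of $\overrightarrow{G}$ equals $G_8$, with vertices labelled as in the figure. Write $K=\{v_2,v_3,v_5,v_8\}$ for the central $4$-cycle, taken in the cyclic order $v_2,v_3,v_8,v_5$ (its edges being $v_2v_3,v_3v_8,v_8v_5,v_5v_2$), and let $v_1,v_4,v_6,v_7$ be the pendant vertices attached to $v_2,v_3,v_5,v_8$. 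Since being a 2qBMG is invariant under isomorphism, we may reason modulo two automorphisms of $G_8$: the rotation $r$ with $v_2\mapsto v_3\mapsto v_8\mapsto v_5\mapsto v_2$ and $v_1\mapsto v_4\mapsto v_7\mapsto v_6\mapsto v_1$ (which exchanges the two colour classes), and the colour-preserving reflection $s$ fixing $v_1,v_2,v_7,v_8$ and exchanging $v_3\leftrightarrow v_5$, $v_4\leftrightarrow v_6$.

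The first step is a local lemma: \emph{if the arc $v_1\to v_2$ is present in $\overrightarrow{G}$, then $N^{+}(v_8)\subseteq\{v_7\}$}. Indeed $N^{+}(v_1)=\{v_2\}$ and $\{v_1,v_8\}$ is a non-adjacent pair of distinct colours, so (N1) applied to $\{v_1,v_8\}$ forbids, for each $w\in\{v_3,v_5\}$, having simultaneously $v_8\to w$ and $v_2\to w$ (this would put $w\in N^{+}(v_8)\cap N^{+}(N^{+}(v_1))$) and having simultaneously $v_8\to w$ and $w\to v_2$ (this would put $v_2\in N^{+}(v_1)\cap N^{+}(N^{+}(v_8))$); as $v_2w$ is an edge, $v_8\to w$ cannot occur, whence $N^{+}(v_8)\subseteq\{v_7\}$. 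Applying $r$ yields the three rotated statements, e.g.\ that $v_4\to v_3$ implies $N^{+}(v_5)\subseteq\{v_6\}$. Using this, I would show that every pendant edge points outward: $v_2\to v_1$, $v_3\to v_4$, $v_5\to v_6$, $v_8\to v_7$, with no reverse arc present. By the rotational symmetry it is enough to rule out the arc $v_1\to v_2$. If $v_1\to v_2$, the lemma gives $N^{+}(v_8)\subseteq\{v_7\}$, so the edges $v_3v_8$ and $v_5v_8$ are oriented $v_3\to v_8$ and $v_5\to v_8$; hence $v_8\in N^{+}(v_3)\cap N^{+}(v_5)$, and by (N3), applying $s$ if needed, we may assume $N^{+}(v_3)\subseteq N^{+}(v_5)$. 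Then $v_3\to v_4$ would force the non-neighbour $v_4$ into $N^{+}(v_5)$, so the edge $v_3v_4$ is oriented $v_4\to v_3$; the rotated lemma then gives $N^{+}(v_5)\subseteq\{v_6\}$, contradicting $v_5\to v_8$.

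Once the pendant orientations are fixed, $v_1,v_4,v_6,v_7$ are sinks, and the proof concludes by pinning down the orientation of the central $4$-cycle $K$. The key remark, from bi-transitivity (N2), is that $K$ contains no directed path $x\to w\to x'$ joining two opposite vertices $x,x'$ of $K$: indeed $x'$ points to its own pendant, so (N2) would put the pendant of $x'$ in $N^{+}(x)$, which is absurd since $x$ is not adjacent to it. This excludes every bidirected edge of $K$ --- a bidirected $v_2v_3$ forces $v_8\to v_3$, recreating the forbidden path $v_8\to v_3\to v_2$ --- so $K$ is an oriented $4$-cycle, and it leaves only the two ``alternating'' orientations of $K$, in which one colour class is a pair of sources and the other a pair of sinks (any other orientation of an oriented $4$-cycle contains a directed path joining two opposite vertices). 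In such an orientation the two same-coloured source vertices of $K$ have out-neighbourhoods that intersect in the two sinks of $K$ yet are incomparable, since they differ exactly in their respective pendants; this contradicts (N3). As all cases are impossible, $G_8$ is not an un2qBMG, and by Proposition~\ref{prop:un2qbmg_hereditary} it is a forbidden induced subgraph.

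I expect the main obstacle to be the second step --- forcing every pendant edge to point outward --- since there (N1), through the local lemma and its rotated copy, must be combined carefully with (N3), and the possibility of bidirected pendant edges must also be excluded. Once that is settled, the central $4$-cycle admits so few orientations that (N2) and (N3) dispose of all of them at once, making the final step short.
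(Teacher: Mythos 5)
Your proposal is correct and, at the top level, follows the same two-stage skeleton as the paper's proof: first force every pendant edge to point outward, then derive a contradiction from the orientation of the central $4$-cycle using (N2) and (N3). The details of both stages are organized differently, however. In stage one the paper, assuming the arc $v_1\to v_2$, applies (N1) and (N2) to configurations built on pairs of pendant vertices ($\{v_1,v_4\}$ and $\{v_1,v_6\}$) to force $v_3\to v_2$, $v_5\to v_2$, $v_3\to v_4$, $v_5\to v_6$, and then violates (N3) at $v_3,v_5$ via their common out-neighbour $v_2$; your local lemma instead applies (N1) to the non-adjacent pair consisting of the pendant $v_1$ and the \emph{opposite} cycle vertex $v_8$ to obtain $N^{+}(v_8)\subseteq\{v_7\}$, and reaches the (N3) contradiction through the common out-neighbour $v_8$ of $v_3$ and $v_5$ together with the rotated copy of the lemma --- a genuinely different pivot, which avoids (N2) in this stage at the price of a second (N3) application. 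In stage two the paper takes a shortcut (WLOG $v_2\to v_3$, then (N2) forces $v_8\to v_3$, and (N3) fails for $v_2,v_8$), whereas you exhaustively classify the admissible orientations of the $4$-cycle via the no-directed-path-between-opposite-vertices remark, excluding bidirected edges and reducing to the two alternating orientations before invoking (N3); this is longer but equally valid and arguably more transparent about why the cycle cannot be oriented at all. I checked each step: the maps $r$ and $s$ are indeed automorphisms of the Sunlet$_4$ graph, the WLOG reductions preserve the standing hypotheses, and every application of (N1)--(N3) is legitimate, so the plan constitutes a complete proof.
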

\begin{proof}
It is straightforward to check that the graph $G$ in
Figure~\ref{fig:sunlet4+} is $(P_6,C_6)$-free. Now assume that $G$ is a
un2qBMG and let $\dG$ denote a 2qBMG whose underlying undirected graph is
$G$. Two cases arise according to the possible orientations of the edge
$v_1v_2$.
\begin{itemize}
\item[(i)] $v_1v_2\in E(\dG)$. Then $v_3v_2\in E(\dG)$, otherwise either
  $[v_1,v_2,v_3,v_4]$ is a (N1)-configuration or $v_1v_2v_3v_4$ violates
  (N2), depending on whether $v_4v_3\in E(\dG)$ or $v_3v_4\in
  E(\dG)$. A similar argument on $\{v_1,v_2,v_5,v_6\}$ implies $v_5v_2\in
  E(\dG)$. Furthermore, $v_5v_6\in E(\dG)$, otherwise $[v_6,v_5,v_2,v_1]$
  is a (N1)-configuration. A similar argument, by symmetry, implies that $v_3v_4\in E(\dG)$. But then $\{v_2,v_3,v_4,v_5,v_6\}$ violates (N3) with respect to $v_3$ and $v_5$.
  \item[(ii)] $v_2v_1\in E(\dG)$. Then by case (i) and by symmetry, one can assume (more generally) that $v_3v_4,v_5v_6,v_8v_7\in E(\dG)$. W.l.g. by symmetry, assume that $v_2v_3\in E(\dG)$. Thus $v_8v_3\in E(\dG)$, otherwise $v_2v_3v_8v_7$ violates (N2). But then $\{v_1,v_2,v_3,v_8,v_7\}$ violates (N3) with respect to $v_2$ and $v_8$.  
\end{itemize}
In conclusion, $G$ is not un2qBMG as neither case (i) nor case (ii) occurs.
Since un2qBMGs form a hereditary graph class, $G$ is, in particular, a forbidden induced subgraph for un2qBMGs.
\end{proof}
The following result is an immediate consequence of
Theorem~\ref{thm:extra_forbidden} and Corollary~\ref{cor:chordal_bip}.
\begin{corollary}
\label{cor:strictly_P6_C6}
The underlying undirected graphs of 2qBMGs are strictly contained in the class of
$P_6$-free chordal bipartite graphs.
\end{corollary}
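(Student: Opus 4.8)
The plan is to simply combine Corollary~\ref{cor:chordal_bip} with Theorem~\ref{thm:extra_forbidden}. Corollary~\ref{cor:chordal_bip} already gives one inclusion — every underlying undirected graph of a 2qBMG is a $P_6$-free chordal bipartite graph — so the only thing left to establish is strictness, i.e.\ to produce a single $P_6$-free chordal bipartite graph that is \emph{not} a un2qBMG.

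For the separating example I would take $G$ to be the Sunlet$_4$ graph of Figure~\ref{fig:sunlet4+}. First I would note that $G$ is bipartite (the two-colouring drawn in the figure is a proper $2$-colouring of $G$) and that, as already observed at the start of the proof of Theorem~\ref{thm:extra_forbidden}, $G$ is $(P_6,C_6)$-free. By the observation preceding Corollary~\ref{cor:chordal_bip}, a $(P_6,C_6)$-free bipartite graph has no induced cycle of length at least six and is therefore chordal bipartite; hence $G$ is a $P_6$-free chordal bipartite graph. On the other hand, Theorem~\ref{thm:extra_forbidden} states precisely that the Sunlet$_4$ graph is a forbidden subgraph for the class of un2qBMGs, so $G$ is not a un2qBMG. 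Putting the two inclusions together shows that the underlying undirected graphs of 2qBMGs form a proper subclass of the $P_6$-free chordal bipartite graphs.

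There is essentially no obstacle here, since the statement is flagged as an immediate consequence of the two cited results; the proof is a one-line combination of them. The only points one might want to spell out for self-containedness are the elementary facts about the Sunlet$_4$ graph already used in Theorem~\ref{thm:extra_forbidden}: that it is bipartite, and that it is $(P_6,C_6)$-free — the latter being easy because its four pendant vertices have degree $1$, so every cycle of $G$ lies inside the central induced $C_4$, whence $G$ has no induced cycle of length greater than four and, in particular, no induced $C_6$.
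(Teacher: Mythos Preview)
Your proposal is correct and follows exactly the same route as the paper, which simply records the corollary as an immediate consequence of Corollary~\ref{cor:chordal_bip} and Theorem~\ref{thm:extra_forbidden}. The extra detail you supply about why the Sunlet$_4$ graph is bipartite and $(P_6,C_6)$-free is fine and just makes explicit what the paper leaves implicit.
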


In the remainder of this section, we establish that un2qBMGs may contain an induced $P_4$ or $P_5$, as well as a $C_4$. The simple but tedious proofs, which proceed case by case, are omitted for brevity. For a proof of Proposition~\ref{prop:P5}, the reader is referred to the proof of~\cite[Theorem 3.2]{AK2024ITAT}.

\begin{proposition}\label{prop:P5}
  The six non-isomorphic 2qBMGs in Figure~\ref{fig:p5}(a) are
  exactly the non-isomorphic 2qBMGs on five vertices whose underlying
  undirected graph is a $P_5$ path-graph.
  \begin{figure}
\begin{tikzpicture}[>={Stealth[bend]}, x=1cm, y=1cm]
    \tikzset{filled/.style={fill,circle,inner sep=2pt}} 
    \tikzset{open/.style={draw,circle,inner sep=2pt}}   
    \begin{scope}[scale=1]
        \node[label=right:{$v_4$},filled] (4) at (1, 0) {};
        \node[label=right:{$v_5$}, open] (5) at (0.309, 0.951) {};
        \node[label=left:{$v_1$},open] (1) at (-0.809, 0.588) {};
        \node[label=left:{$v_2$},filled] (2) at (-0.809, -0.588) {};
        \node[label=right:{$v_3$},open] (3) at (0.309, -0.951) {};
    \end{scope}
    \begin{scope}[every edge/.style={draw=black}, scale=0.4]
        \draw [->] (1) -- (2);
        \draw [<-] (2) -- (3);
        \draw [->] (3) -- (4);
        \draw [->] (4) -- (5);
    \end{scope}
\node at (-1,1.9) {\large{(a)}}; 
\end{tikzpicture}
\begin{tikzpicture}[>={Stealth[bend]}, x=1cm, y=1cm]
    \tikzset{filled/.style={fill,circle,inner sep=2pt}} 
    \tikzset{open/.style={draw,circle,inner sep=2pt}}   
    \begin{scope}[scale=1]
        \node[label=right:{$v_4$},filled] (4) at (1, 0) {};
        \node[label=right:{$v_5$}, open] (5) at (0.309, 0.951) {};
        \node[label=left:{$v_1$},open] (1) at (-0.809, 0.588) {};
        \node[label=left:{$v_2$},filled] (2) at (-0.809, -0.588) {};
        \node[label=right:{$v_3$},open] (3) at (0.309, -0.951) {};
    \end{scope}
    \begin{scope}[every edge/.style={draw=black}, scale=0.4]
        \draw [->] (1) -- (2);
        \draw [<-] (2) -- (3);
        \draw [->] (3) -- (4);
        \draw [<-] (4) -- (5);
    \end{scope}
\end{tikzpicture}
\begin{tikzpicture}[>={Stealth[bend]}, x=1cm, y=1cm]
    \tikzset{filled/.style={fill,circle,inner sep=2pt}} 
    \tikzset{open/.style={draw,circle,inner sep=2pt}}   
    \begin{scope}[scale=1]
        \node[label=right:{$v_4$},filled] (4) at (1, 0) {};
        \node[label=right:{$v_5$}, open] (5) at (0.309, 0.951) {};
        \node[label=left:{$v_1$},open] (1) at (-0.809, 0.588) {};
        \node[label=left:{$v_2$},filled] (2) at (-0.809, -0.588) {};
        \node[label=right:{$v_3$},open] (3) at (0.309, -0.951) {};
    \end{scope}
    \begin{scope}[every edge/.style={draw=black}, scale=0.4]
        \draw [<-] (1) -- (2);
        \draw [<-] (2) -- (3);
        \draw [->] (3) -- (4);
        \draw [->] (4) -- (5);
    \end{scope}
\end{tikzpicture}
\hspace{-0.5cm} 
\begin{tikzpicture}[>={Stealth[bend]},x=1cm,y=1cm,bullet/.style={fill,circle,inner sep=3pt}]
    \tikzset{filled/.style={fill,circle,inner sep=2pt}} 
    \tikzset{open/.style={draw,circle,inner sep=2pt}}   
\begin{scope}[scale=0.7]
    \node[label=below:{$v_1$}, open] (v1) at (0,0) {};
   \node[label=above:{ $v_2$}, filled] (v2) at (0,2) {};
   \node[label=below:{ $v_3$}, open] (v3) at (2,0) {};
   \node[label=above:{ $v_4$}, filled] (v4) at (2,2) {};
   \node[label=above:{ $v_6$}, filled] (v6) at (4,2) {};
   \node[label=below:{ $v_5$}, open] (v5) at (4,0) {};
 \end{scope}
\begin{scope}[ every edge/.style={draw=black},scale=1.5]
\draw [-] (v1) edge [->, thick] (v2);
\draw [-] (v3) edge [->, thick] (v2);
\draw [-] (v3) edge [->, thick] (v4);
\draw [-] (v5) edge [->, thick] (v4);
\draw [-] (v5) edge [->, thick] (v6);
\end{scope}
\node at (0,2.5) {\large{(b)}}; 
\end{tikzpicture}
\caption{(a) 2qBMGs with 5-path-graph un2qBMG:
  $\overrightarrow{P}_5^{(a)}$ graph (left), $\overrightarrow{P}_5^{(b)}$
  graph (middle), and $\overrightarrow{P}_5^{(c)}$ graph (right). The
  vertex set of
  $\overrightarrow{P}_5^{(ab)},\overrightarrow{P}_5^{(ac)},\overrightarrow{P}_5^{(abc)}$
  coincides with $V(\overrightarrow{P}_5^{(a)})$. The edge sets are
  $E(\overrightarrow{P}_5^{(ab)}):=E(\overrightarrow{P}_5^{(a)})\cup
  E(\overrightarrow{P}_5^{(b)}),
  E(\overrightarrow{P}_5^{(ac)}):=E(\overrightarrow{P}_5^{(a)})\cup
  E(\overrightarrow{P}_5^{(c)}),$ and $
  E(\overrightarrow{P}_5^{(abc)}):=E(\overrightarrow{P}_5^{(a)})\cup
  E(\overrightarrow{P}_5^{(b)})\cup E(\overrightarrow{P}_5^{(c)})$. (b)
  Case (i) of the proof of Theorem~\ref{thm:P6free}: an orientation of
  $\protect\dG$.}
\label{fig:p5}
\end{figure}
\end{proposition}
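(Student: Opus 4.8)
The plan is to fix the underlying path-graph $P_5=v_1v_2v_3v_4v_5$ (with no edges other than $v_iv_{i+1}$) and classify, up to isomorphism, the orientations of its four edges that turn it into a 2qBMG. Since $P_5$ is connected and bipartite its proper $2$-coloring is unique, namely the bipartition $\{v_1,v_3,v_5\}\cup\{v_2,v_4\}$, so the coloring $\sigma$ of $\dG$ is forced; by Proposition~\ref{pro23112025} the task then reduces to deciding which of the $3^4$ orientations satisfy (N1), (N2) and (N3). Throughout, I would exploit the reflection $\iota\colon v_i\mapsto v_{6-i}$, the only non-trivial automorphism of $P_5$, both to shorten the casework and to read off isomorphism classes.

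The crucial step is to show that in every 2qBMG with this underlying graph the vertex $v_3$ is a source with $N^+(v_3)=\{v_2,v_4\}$ and $N^-(v_3)=\emptyset$. To prove it I would assume $v_2\to v_3\in E(\dG)$ and split on the edge $v_3v_4$. If $v_3\to v_4\in E(\dG)$, then $v_4\to v_5\in E(\dG)$ gives a directed path $v_2\to v_3\to v_4\to v_5$ violating (N2) (since $v_2v_5\notin E(G)$), while $v_5\to v_4\in E(\dG)$ with $v_4\to v_5\notin E(\dG)$ makes $[v_2,v_3,v_4,v_5]$ an (N1)-configuration, and a $2$-cycle on $v_4v_5$ falls under the first sub-case; hence $v_4\to v_3\in E(\dG)$ and $v_3\to v_4\notin E(\dG)$, so $v_2,v_4\in N^-(v_3)$. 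Now $v_5\to v_4\in E(\dG)$ would make $[v_5,v_4,v_3,v_2]$ an (N1)-configuration and $v_1\to v_2\in E(\dG)$ would make $[v_1,v_2,v_3,v_4]$ one, so $v_4\to v_5$ and $v_2\to v_1$ must both hold strictly; but then $E(\dG)\supseteq\{v_2\to v_1,v_2\to v_3,v_4\to v_3,v_4\to v_5\}$, whence $v_3\in N^+(v_2)\cap N^+(v_4)$ while $v_1\in N^+(v_2)\setminus N^+(v_4)$ and $v_5\in N^+(v_4)\setminus N^+(v_2)$, contradicting (N3). Thus $v_2\to v_3\notin E(\dG)$, i.e. $v_3\to v_2$ strictly; applying $\iota$ gives $v_3\to v_4$ strictly as well, which establishes the claim.

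Once $v_3$ is a source pointing to $v_2$ and $v_4$, the only remaining freedom is the orientation of the two pendant edges $v_1v_2$ and $v_4v_5$, giving $3\times 3=9$ digraphs. Each of them is already a 2qBMG: (N1) holds because the only induced $P_4$'s of $P_5$ are $v_1v_2v_3v_4$ and $v_2v_3v_4v_5$, in both of which $v_3$ is an internal vertex, so an (N1)-configuration would require an in-arc at $v_3$; (N3) holds because, apart from the pairs $\{v_1,v_3\}$ and $\{v_3,v_5\}$ (for which the out-neighbourhood of the degree-one endpoint is contained in $N^+(v_3)=\{v_2,v_4\}$), the only pair of vertices with a common neighbour is $\{v_2,v_4\}$, which has no common out-neighbour since $v_3\notin N^+(v_2)\cup N^+(v_4)$; and (N2) holds because, $v_3$ having no in-arc, every directed walk of length three is confined to $\{v_1,v_2\}$ or to $\{v_4,v_5\}$ (possibly prefixed by the source $v_3$), and in such a short alternating walk the required arc between the two ends is always present. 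Hence all $9$ digraphs satisfy (N1)--(N3), so by Proposition~\ref{pro23112025} they are precisely the 2qBMGs on $P_5$; the action of $\iota$ partitions them into exactly $6$ isomorphism classes -- the three single-arc choices $(v_1\to v_2,v_4\to v_5)$, $(v_1\to v_2,v_5\to v_4)$, $(v_2\to v_1,v_4\to v_5)$ give $\overrightarrow{P}_5^{(a)},\overrightarrow{P}_5^{(b)},\overrightarrow{P}_5^{(c)}$; the orientations with exactly one pendant $2$-cycle give $\overrightarrow{P}_5^{(ab)}$ and $\overrightarrow{P}_5^{(ac)}$; and the one with two pendant $2$-cycles gives $\overrightarrow{P}_5^{(abc)}$ -- and these six are pairwise non-isomorphic, being distinguished by the number of symmetric edge pairs ($0$, $1$, $2$) together with the multiset of in-/out-degrees.

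The step I expect to be the main obstacle is the source claim for $v_3$: the individual sub-cases are each very short, but one must be careful with the $2$-cycle possibilities -- an edge present in both directions enters several of the single-arc branches -- so that no orientation is overlooked; once $v_3$ is known to be a source, the remaining verification and the isomorphism bookkeeping are routine.
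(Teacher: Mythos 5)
Your proof is correct. The paper itself omits the proof of this proposition (deferring to the extended abstract \cite{AK2024ITAT}) and describes it only as a case-by-case verification via Proposition~\ref{pro23112025}; that is exactly what you do, organized efficiently around the key observation that $v_3$ must be a source with $N^+(v_3)=\{v_2,v_4\}$, which reduces the $3^4$ orientations to the nine determined by the two pendant edges and yields the six $\iota$-orbits matching Figure~\ref{fig:p5}(a).
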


\begin{proposition}\label{thm:P4-cases}
  The four non-isomorphic 2qBMGs in Figure~\ref{fig:c4}(b) are
  exactly those with $P_4$ as their underlying undirected graph.
\end{proposition}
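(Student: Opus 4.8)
The plan is to apply the tree-free characterization of Proposition~\ref{pro23112025} directly, since here the underlying graph is fully prescribed. Fix the path $v_1v_2v_3v_4$ with edges $v_1v_2,v_2v_3,v_3v_4$; because $P_4$ is connected and bipartite, its proper $2$-colouring is unique up to swapping the two colours, so the colour classes are forced to be $\{v_1,v_3\}$ and $\{v_2,v_4\}$. A digraph $(\dG,\sigma)$ whose underlying undirected graph is this $P_4$ carries, on each of the three edges, exactly one of the local patterns ``$\to$'', ``$\leftarrow$'', ``symmetric pair'', and carries no arc on any of the non-edges $v_1v_3,v_1v_4,v_2v_4$; hence there are $3^3=27$ candidates. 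The only nontrivial automorphism of $P_4$ is the flip $\phi\colon v_i\mapsto v_{5-i}$, which also interchanges the two colour classes, and every isomorphism between two of our candidates is induced by an automorphism of $P_4$; so the isomorphism classes of candidate $2$-coloured digraphs correspond to the orbits of $\phi$. It therefore suffices to single out the candidates obeying (N1)--(N3) and to count their $\phi$-orbits.

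Next I would specialize each of (N1), (N2), (N3) to $P_4$. Condition (N3) is vacuous here: the only non-adjacent pairs lying in a common colour class are $\{v_1,v_3\}$ and $\{v_2,v_4\}$ (pairs in different classes automatically have disjoint out-neighbourhoods), the unique possible common out-neighbour is the central vertex $v_2$, resp.\ $v_3$, and whenever it occurs the out-neighbourhood of the degree-one endpoint equals the corresponding singleton, hence is contained in that of the other vertex. For (N2): the only directed $3$-walk in $P_4$ joining two non-adjacent vertices is $v_1\to v_2\to v_3\to v_4$ or its reverse (for any other directed $3$-walk the two endpoints are adjacent, so the edge demanded by bi-transitivity is already present), whence (N2) fails exactly for the candidates containing all of $v_1\to v_2,v_2\to v_3,v_3\to v_4$, or all of $v_4\to v_3,v_3\to v_2,v_2\to v_1$. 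For (N1): an (N1)-configuration $[u,t,w,v]$ consists of distinct vertices with $u\not\sim v$ and arcs $u\to t$, $t\to w$, $v\to w$, so $u-t-w-v$ is a path on all four vertices; this forces $\{u,v\}=\{v_1,v_4\}$, and the two admissible orderings both require the arcs $v_1\to v_2$ and $v_4\to v_3$ together with one of the two orientations of the central edge $v_2v_3$, at least one of which is always present. Hence (N1) fails exactly for the candidates containing both $v_1\to v_2$ and $v_4\to v_3$.

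Combining the three reductions, a candidate is a $2$qBMG iff it contains neither the pair of arcs $\{v_1\to v_2,\ v_4\to v_3\}$ nor the directed path $v_1\to v_2\to v_3\to v_4$ nor its reverse. Running through the $27$ candidates (really $15$ up to $\phi$) against these forbidden patterns leaves exactly seven; applying $\phi$ sorts them into three orbits of size two and one singleton, giving exactly four isomorphism classes, with representatives $\{v_2\to v_1,v_2\to v_3,v_3\to v_4\}$, $\{v_2\to v_1,v_2\to v_3,v_4\to v_3\}$, $\{v_2\to v_1,v_2\to v_3,v_3\leftrightarrow v_4\}$, and $\{v_2\to v_1,v_2\leftrightarrow v_3,v_3\to v_4\}$. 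It then remains only to check that these four coincide with the four digraphs drawn in Figure~\ref{fig:c4}(b). Conversely, each of the four satisfies (N1)--(N3) by the reductions above and hence is a $2$qBMG by Proposition~\ref{pro23112025}, so no explicit tree construction is required.

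The only genuine difficulty is bookkeeping: enumerating the $27$ candidates without overlooking a forbidden pattern, and getting the (N1) reduction exactly right — the subtle point being that although the two candidate (N1)-configurations call for opposite orientations of the central edge $v_2v_3$, their union still collapses to the single condition ``$v_1\to v_2$ and $v_4\to v_3$ are both present'', because one of those two orientations is always available. Verifying that the four surviving isomorphism classes are precisely the ones pictured in Figure~\ref{fig:c4}(b), rather than merely that there are four of them, is the remaining point requiring care.
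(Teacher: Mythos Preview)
Your proposal is correct and follows exactly the route the paper indicates: the authors explicitly omit the proof, stating only that ``the simple but tedious proofs, which proceed case by case, are omitted for brevity,'' and your argument supplies precisely such a case analysis via the (N1)--(N3) characterization. Your reductions of (N1), (N2), (N3) on $P_4$ are all accurate (in particular the collapse of the (N1) condition to ``$v_1\to v_2$ and $v_4\to v_3$ both present''), the count of seven surviving candidates and four $\phi$-orbits is right, and your four representatives agree, up to the choice of orbit representative, with the four digraphs encoded in Figure~\ref{fig:c4}(b).
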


\begin{proposition} \label{thm:C4-cases}
  The ten graphs in Figure~\ref{fig:c4}(a) are exactly the non-isomorphic
  2qBMGs on four vertices whose underlying undirected graph is a $C_4$
  cyle-graph.

\begin{figure}[ht!]
    \centering
\begin{tikzpicture}[>={Stealth[bend]},x=1cm,y=1cm]
    \tikzset{filled/.style={fill,circle,inner sep=2pt}} 
    \tikzset{open/.style={draw,circle,inner sep=2pt}}   
\begin{scope}[scale=0.4]
   \node[filled] (b) at (4,0) {};
   \node[filled] (c) at (0,4) {};
   \node[open] (a) at (0,0) {};
   \node[open] (d) at (4,4) {};
 \end{scope}
\begin{scope}[ every edge/.style={draw=black} ,scale=0.4]
\draw [-] (a) edge [->, thick] (b);
\draw [-] (c) edge [<-, thick] (a);
\draw [-] (b) edge [<-, thick] (d);
\draw [-] (c) edge [->, thick] (d);
\draw [<-,gray,dashed,thick] (a.north) to [out=30,in=330] (c.south);
\draw [<-,gray,dashed,thick] (d.south) to [out=120,in=150] (b.north);
\draw [->,gray,dashed,thick] (d.north) to [out=150,in=30] (c.north);
\end{scope}
\node at (-0.5,2) {\large{(a)}}; 
\end{tikzpicture}
\begin{tikzpicture}[>={Stealth[bend]},x=1cm,y=1cm]
    \tikzset{filled/.style={fill,circle,inner sep=2pt}} 
    \tikzset{open/.style={draw,circle,inner sep=2pt}}   
\begin{scope}[scale=0.4]
   \node[open] (b) at (4,0) {};
   \node[open] (c) at (0,4) {};
   \node[filled] (a) at (0,0) {};
   \node[filled] (d) at (4,4) {};
 \end{scope}
\begin{scope}[ every edge/.style={draw=black} ,scale=0.4]
\draw [-] (b) edge [<-, thick] (a);
\draw [-] (a) edge [<-, thick] (c);
\draw [-] (b) edge [<-, thick] (d);
\draw [-] (c) edge [->, thick] (d);
\draw [->,gray,dashed,thick] (d.north) to [out=150,in=30] (c.north);
\draw [<-,gray,dashed,thick] (d.south) to [out=120,in=150] (b.north);
\end{scope}
\end{tikzpicture}
\begin{tikzpicture}[>={Stealth[bend]},x=1cm,y=1cm]
    \tikzset{filled/.style={fill,circle,inner sep=2pt}} 
    \tikzset{open/.style={draw,circle,inner sep=2pt}}   
\begin{scope}[scale=0.4]
   \node[open] (b) at (4,0) {};
   \node[open] (c) at (0,4) {};
   \node[filled] (a) at (0,0) {};
   \node[filled] (d) at (4,4) {};
 \end{scope}
\begin{scope}[ every edge/.style={draw=black} ,scale=0.4]
\draw [-] (b) edge [->, thick] (a);
\draw [-] (a) edge [<-, thick] (c);
\draw [-] (b) edge [<-, thick] (d);
\draw [-] (c) edge [->, thick] (d);
\draw [->,gray,dashed,thick] (d.north) to [out=150,in=30] (c.north);
\draw [<-,gray,dashed,thick] (d.south) to [out=120,in=150] (b.north);
\end{scope}
\end{tikzpicture}
\begin{tikzpicture}[>={Stealth[bend]},x=1cm,y=1cm]
    \tikzset{filled/.style={fill,circle,inner sep=2pt}} 
    \tikzset{open/.style={draw,circle,inner sep=2pt}}   
\begin{scope}[scale=0.4]
   \node[open] (b) at (4,0) {};
   \node[open] (c) at (0,4) {};
   \node[filled] (a) at (0,0) {};
   \node[filled] (d) at (4,4) {};
 \end{scope}
\begin{scope}[ every edge/.style={draw=black} ,scale=0.4]
\draw [-] (b) edge [<-, thick] (a);
\draw [->,thick] (b.north) to [out=150,in=30] (a.north);
\draw [-] (a) edge [<-, thick] (c);
\draw [<-,thick] (c.south) to [out=120,in=150] (a.north);
\draw [-] (b) edge [<-, thick] (d);
\draw [<-,thick] (d.south) to [out=120,in=150] (b.north);
\draw [-] (c) edge [<-, thick] (d);
\draw [->,thick] (c.north) to [out=30,in=150] (d.north);
\end{scope}
\end{tikzpicture}
\begin{tikzpicture}[>={Stealth[bend]},x=1cm,y=1cm]
    \tikzset{filled/.style={fill,circle,inner sep=2pt}} 
    \tikzset{open/.style={draw,circle,inner sep=2pt}}   
\begin{scope}[scale=0.4]
   \node[open] (b) at (4,0) {};
   \node[open] (c) at (0,4) {};
   \node[filled] (a) at (0,0) {};
   \node[filled] (d) at (4,4) {};
 \end{scope}
\begin{scope}[ every edge/.style={draw=black},scale=0.4]
\draw [-] (a) edge [->, thick] (c);
\draw [-] (a) edge [->, thick] (b);
\draw [-] (b) edge [->, thick] (d);
\draw [->,gray,dashed,thick] (b.north) to [out=150,in=30] (a.north);
\end{scope}
\node at (-0.5,2) {\large{(b)}}; 
\end{tikzpicture}
\begin{tikzpicture}[>={Stealth[bend]},x=1cm,y=1cm]
    \tikzset{filled/.style={fill,circle,inner sep=2pt}} 
    \tikzset{open/.style={draw,circle,inner sep=2pt}}   
\begin{scope}[scale=0.4]
  \node[open] (b) at (4,0) {};
   \node[open] (c) at (0,4) {};
   \node[filled] (a) at (0,0) {};
   \node[filled] (d) at (4,4) {};
 \end{scope}
\begin{scope}[ every edge/.style={draw=black} ,scale=0.4]
\draw [-] (b) edge [->, thick] (a);
\draw [-] (c) edge [->, thick] (a);
\draw [-] (b) edge [->, thick] (d);
\draw [->,gray,dashed,thick] (a.north) to [out=30,in=330] (c.south);
\end{scope}
\end{tikzpicture}
  \caption{2qBMGs for 4-path-graph and 4-cycle-graph un2qBMGs. Black edges
    are required. Dashed edges are optional and taken in (a) one at a time
    (middle-right) or in pairs: north-east, north-west, or east-west (left).}
    \label{fig:c4}
\end{figure}
\end{proposition}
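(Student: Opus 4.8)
The plan is to reduce the statement to the combinatorial characterization of Proposition~\ref{pro23112025} and then carry out a finite, symmetry-reduced case analysis. A $C_4$ is bipartite with a unique bipartition, so its two-coloring $\sigma$ is determined up to interchanging the two colors and is automatically proper; hence a digraph $\dG$ whose underlying undirected graph is a $C_4$ is a 2qBMG if and only if it satisfies (N1), (N2) and (N3). Fixing the cycle $C_4$ with vertices $v_1,v_2,v_3,v_4$ in cyclic order, every such $\dG$ arises by choosing, for each of the four edges, one of three states --- one of the two single arcs, or the pair of symmetric edges --- which gives $3^4=81$ candidate digraphs. The automorphism group of the colored $C_4$ acts on these candidates, with color-exchanging automorphisms admissible since the two colors may be relabeled, so it suffices to examine one representative per isomorphism class; I would organize the cases by the state of the single edge $v_1v_2$, in the spirit of the branching on the neighborhood of $v_2$ used in the proof of Theorem~\ref{thm:P6free}.

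The first observation is that (N1) holds vacuously in this setting. An (N1)-configuration $[u,t,w,v]$ involves four pairwise distinct vertices, hence all of $V(C_4)$, with $u,v$ independent and $tw$ an edge; but the independent pairs of $C_4\cong K_{2,2}$ are exactly its two color classes, so $\{t,w\}$ would have to be the other color class, which is independent --- a contradiction. Thus only (N2) and (N3) can be violated. For (N2), a non-trivial instance ``$uv,vw,wt\in E(\dG)$'' on a $C_4$ is a directed Hamiltonian path, and such a path must go monotonically around the cycle; there are exactly eight of them (the four undirected Hamiltonian paths, each traversed in two directions), and for each one (N2) forces the arc $u\to t$ from its initial vertex $u$ to its terminal vertex $t$. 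This produces eight implications constraining the four edge states. For (N3), the out-neighborhood of each $v_i$ is a subset of size at most two of the opposite color class, so $N^+(u)\cap N^+(v)\neq\emptyset$ forces $u$ and $v$ to have the same color; hence (N3) only has to be checked for the pairs $\{v_1,v_3\}$ and $\{v_2,v_4\}$, where the requirement that the two out-neighborhoods be nested or disjoint becomes a short condition on the edge states.

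With these reductions in hand, I would run through the isomorphism classes, discard those that violate one of the eight (N2)-implications or one of the two (N3)-conditions, and check that the survivors are precisely the ten digraphs displayed in Figure~\ref{fig:c4}(a). The converse direction then comes for free: each surviving digraph satisfies (N1), (N2) and (N3), so it is a 2qBMG by Proposition~\ref{pro23112025}, and no explicit leaf-colored tree has to be constructed. The hard part will be purely organizational --- making sure the case distinction is exhaustive while genuinely exploiting the symmetry reduction --- with the states in which an edge is realized as a pair of symmetric edges being the most delicate, since such an edge lies on directed Hamiltonian paths in both directions and therefore activates more of the (N2)-implications.
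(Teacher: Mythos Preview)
Your proposal is correct and matches the approach the paper itself describes: the authors explicitly state that ``the simple but tedious proofs, which proceed case by case, are omitted for brevity,'' and your plan is precisely such a case analysis, reduced via Proposition~\ref{pro23112025} and the symmetries of the colored $C_4$. Your observations that (N1) is vacuous on $K_{2,2}$, that the nontrivial (N2) instances are exactly the eight directed Hamiltonian paths, and that (N3) only constrains the two same-color pairs are all sound and give exactly the right scaffolding for the enumeration.
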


\section{Dominating sets in 2qBMGs}

Theorem~\ref{thm:P6free}, together with previous results, gives new
insights into vertex decompositions involving dominating biclique
sets. Recall that a $K\oplus S$ decomposition of $V(G)$ is a
  partition of the vertex set into vertices from a biclique $K$ and
  vertices from a stable set $S$. The next result follows directly from
  Corollary~\ref{cor:chordal_bip} and~\cite[Lemma~7]{Takaoka:23}.
\begin{theorem}
  \label{thm:connected-typeA}
  The underlying undirected graph of any 2qBMG has a vertex decomposition into $K\oplus S$, where
  $K$ is a dominating set and $S$ is a stable set.
\end{theorem}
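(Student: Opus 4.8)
The plan is to combine the forbidden-subgraph result already obtained with the known $K\oplus S$-decomposition theory for $(P_6,C_6)$-free bipartite graphs. By Corollary~\ref{cor:chordal_bip}, the underlying undirected graph $G$ of a 2qBMG is $P_6$-free and chordal bipartite; since a chordal bipartite graph has no induced cycle of length $\ge 6$, in particular no induced $C_6$, the graph $G$ is a $(P_6,C_6)$-free bipartite graph. This is exactly the hypothesis under which \cite[Lemma~7]{Takaoka:23} applies, so the theorem should follow with only a short additional argument.

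First I would invoke \cite[Lemma~7]{Takaoka:23} to conclude that $G$ — or, if one prefers to reduce to the connected case first, each connected component of $G$ — is of type $K\oplus S$: either it is degenerate, or its vertex set admits a partition $V(G)=K\cup S$ in which $K$ is a biclique and $S$ is a stable set. The remaining point, and the only genuinely new observation needed, is that the biclique side of such a partition can always be chosen to be dominating. Since $S$ is a stable set, every neighbour of a vertex $s\in S$ lies in $K$; hence, provided $G$ has no isolated vertex, every vertex of $V(G)\setminus K$ has a neighbour in $K$, so $K$ dominates $G$. If $K$ were empty then $V(G)=S$ would be independent, forcing $G$ to be edgeless, and in that degenerate situation any single vertex already serves as a dominating biclique; the case of isolated vertices is handled in the same way.

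The only obstacle is a mild bookkeeping one: matching the precise formulation of \cite[Lemma~7]{Takaoka:23}, which is most naturally stated for connected $(P_6,C_6)$-free bipartite graphs, to the statement above for an arbitrary 2qBMG. One either restricts attention to connected 2qBMGs — consistent with the label of the theorem — or applies the lemma componentwise (equivalently, via the parallel step of Takaoka's canonical decomposition) and records that each component then carries a dominating biclique. No new structural insight beyond Corollary~\ref{cor:chordal_bip} and \cite[Lemma~7]{Takaoka:23} is required: the theorem is essentially the translation of the $P_6$- and $C_6$-freeness established in Section~\ref{secindu} into the language of dominating bicliques.
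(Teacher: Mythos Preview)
Your proposal is correct and matches the paper's own argument almost verbatim: the paper's proof consists of the single sentence that the result ``follows directly from Corollary~\ref{cor:chordal_bip} and~\cite[Lemma~7]{Takaoka:23}.'' Your additional remarks on why the biclique side is automatically dominating and on the handling of degenerate/disconnected cases are sound elaborations of what the paper leaves implicit.
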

Note that the dominating biclique $K$ is an un2qBMG, since it arises as the underlying undirected graph of a digraph in which there is an edge from each vertex of $K$ to each vertex of opposite color of $K$.

However, other 2qBMGs may have $K$ as an underlying
undirected graph. In the following, we investigate when an orientation of $\dG$
induces such a 2qBMG.

\begin{proposition}
  \label{prop:delta_bmg}
Let $\dG$ be a 2qBMG. {Let $\overrightarrow{K}$ be an induced subgraph of $\dG$ such that the underlying undirected graph $K$ of $\overrightarrow{K}$ is a biclique, and thus $K$ is a biclique of the underlying undirected graph $G$ of $\dG$. If $\overrightarrow{K}$ has no pair of symmetric edges, the $\overrightarrow{K}$ is a 2qBMG.}
\end{proposition}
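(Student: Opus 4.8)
The plan is to verify conditions (N1), (N2), (N3) of Proposition~\ref{pro23112025} for $\overrightarrow K$; this also follows at once from the heredity of 2qBMGs (\cite[Corollary~3.6]{korchmaros2023quasi}; see also Proposition~\ref{prop:un2qbmg_hereditary}), but carrying out the verification is worthwhile since it makes visible the structure of $\overrightarrow K$ that feeds into Corollary~\ref{cor:consistent_orientation}. We may assume $K$ has an edge, as otherwise $\overrightarrow K$ is trivially a 2qBMG. The first step is to note that the hypothesis --- $\overrightarrow K$ oriented, with underlying graph $K$ complete bipartite having the two colour classes $A$ and $B$ --- exactly says that $\overrightarrow K$ is a \emph{bitournament}: every pair of opposite-coloured vertices is joined by exactly one arc, and no arc joins two vertices of the same colour; in particular $N^+(x)\subseteq B$ for $x\in A$ and $N^+(x)\subseteq A$ for $x\in B$.

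Next I would dispose of (N1) and (N2). Condition (N1) holds vacuously, since a complete bipartite graph has no (N1)-configuration: in a putative configuration $[u,t,w,v]$ the edges $ut,tw$ would force $u$ and $w$ to have the same colour, the edge $wv$ would force $v$ to have the colour opposite to $w$ and hence to $u$, and then $uv\in E(K)$ by completeness, contradicting that $u$ and $v$ are independent. Condition (N2) is inherited by $\overrightarrow K$ from $\dG$: if $xy,yz,zr\in E(\overrightarrow K)$ then these are arcs of $\dG$, so $xr\in E(\dG)$ by (N2) for $\dG$, and since $x,r\in V(\overrightarrow K)$ we get $xr\in E(\overrightarrow K)$.

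The main point is (N3), and here I would argue from the bitournament structure and (N2). Since out-neighbourhoods of opposite-coloured vertices are disjoint, $N^+(u)\cap N^+(v)\ne\emptyset$ forces $u$ and $v$ into a common colour class, say $A$, with $N^+(u),N^+(v)\subseteq B$. Suppose, for contradiction, that neither of $N^+(u),N^+(v)$ contains the other, and choose $b_1\in N^+(u)\setminus N^+(v)$ and $b_2\in N^+(v)\setminus N^+(u)$. As $b_1\notin N^+(v)$ and $\overrightarrow K$ is a bitournament, the unique arc between $v$ and $b_1$ is $b_1\to v$; hence $u\to b_1\to v\to b_2$ is a directed path of length three, and (N2) yields $u\to b_2$, contradicting $b_2\notin N^+(u)$. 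Therefore the out-neighbourhoods of any two same-coloured vertices are nested, which in particular gives (N3), and $\overrightarrow K$ satisfies (N1)--(N3), so it is a 2qBMG. I expect the (N3) step to be the crux; it is also the step that shows $\overrightarrow K$ to be a bi-transitive bitournament, hence acyclic-oriented by \cite[Theorem~2.5]{DGGS}, which is the property exploited afterwards for the dominating-biclique results.
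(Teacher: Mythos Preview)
Your proof is correct and follows the same overall plan as the paper: verify (N1), (N2), (N3) for $\overrightarrow K$. Your treatments of (N1) (vacuous, since a complete bipartite graph has no induced $P_4$) and (N2) (inherited because $\overrightarrow K$ is induced in $\dG$) coincide with the paper's. The one genuine difference is (N3): the paper disposes of it with the same heredity argument as for (N2) (``a similar argument applies to (N3)''), whereas you derive (N3) from (N2) together with the bitournament structure. Your route is sound and has the virtue of actually \emph{using} the ``no symmetric edges'' hypothesis, which under the pure heredity argument is redundant for the bare conclusion that $\overrightarrow K$ is a 2qBMG (as you yourself note at the start). Consequently your expectation that (N3) is ``the crux'' turns out to be off --- in the paper it is a one-line heredity step --- and the acyclic-orientation property you extract at the end is reached in the paper by a different path (via consistent orientations of all of $\dG$ in Theorem~\ref{thm:odd-even}) rather than through $\overrightarrow K$ alone.
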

\begin{proof} 
By Proposition \ref{pro23112025}, it is sufficient to show that $\overrightarrow{K}$ satisfies (N1),(N2) and (N3). Notice that $\overrightarrow{K}$ satisfies (N1), since no biclique has an induced $P_4$. Furthermore, since $\overrightarrow{K}$ is an induced subgraph of $\dG$ and since $\overrightarrow{K}$ has no pair of symmetric edges, if $\overrightarrow{K}$ should not satisfy (N2), then $\dG$ would not satisfy (N2) as well, which would contradict that $\dG$ is a 2qBMG. A similar argument applies to (N3).  
\end{proof}

\begin{theorem}
\label{thm:odd-even}
Let $\dG$ be a 2qBMG. If $\dG$ admits a consistent orientation $\overrightarrow{\Gamma}$, then $\overrightarrow{\Gamma}$ has an induced acyclic-oriented subgraph $\overrightarrow{K}$ (which is not necessarily an induced subgraph of $\dG$), such that the underlying undirected graph $K$ of $\overrightarrow{K}$ is a dominating biclique of the underlying undirected graph $G$ of $\dG$.

\end{theorem}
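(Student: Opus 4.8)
The plan is to obtain $\overrightarrow{K}$ as the restriction of $\overrightarrow{\Gamma}$ to a carefully chosen dominating biclique of $G$. By Theorem~\ref{thm:connected-typeA}, $G$ has at least one dominating biclique; among all of them, fix one, $K$, of minimum order, and set $\overrightarrow{K}:=\overrightarrow{\Gamma}[V(K)]$. Because the underlying undirected graph of $\overrightarrow{\Gamma}$ is $G$, that of $\overrightarrow{K}$ is exactly $K$, a dominating biclique of $G$; this already settles the ``dominating biclique'' part of the statement. Moreover $\overrightarrow{K}$, being a subgraph of the oriented digraph $\overrightarrow{\Gamma}$, has no pair of symmetric edges, and since $K$ is complete bipartite, every pair of oppositely coloured vertices of $K$ spans exactly one arc of $\overrightarrow{K}$; thus $\overrightarrow{K}$ is a bitournament. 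It remains only to prove that $\overrightarrow{K}$ is acyclic, and that is where the work lies.

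First I would note that a bitournament whose underlying graph is complete bipartite is acyclic as soon as it has no directed $4$-cycle: in a shortest directed cycle $v_0\to v_1\to\cdots\to v_{2k-1}\to v_0$ with $2k\ge 6$, the arc between the oppositely coloured vertices $v_0$ and $v_3$ either closes a directed $4$-cycle $v_0\to v_1\to v_2\to v_3\to v_0$ or yields the shorter directed cycle $v_0\to v_3\to v_4\to\cdots\to v_{2k-1}\to v_0$, contradicting minimality. So assume, for contradiction, that $\overrightarrow{K}$ contains a directed $4$-cycle $a_1\to b_1\to a_2\to b_2\to a_1$.

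All four arcs of this cycle belong to $\dG$, since an orientation only removes arcs. Applying bi-transitivity (N2) of $\dG$ to the four length-three directed walks obtained by going once around the cycle gives, in turn, $a_1\to b_2$, $b_2\to a_2$, $a_2\to b_1$ and $b_1\to a_1$ in $E(\dG)$; together with the cycle's arcs this shows that each of the four edges $a_ib_j$ $(i,j\in\{1,2\})$ is a pair of symmetric edges in $\dG$. I would then contradict the minimality of $K$ by proving that $K':=G[V(K)\setminus\{a_2\}]$ is again a dominating biclique. It is complete bipartite with both sides non-empty ($a_1$ and $b_1$ remain), so it is a biclique; and it dominates $G$ because (i) $a_2$ is adjacent to $b_1\in V(K')$, and (ii) no vertex $v\in V(G)\setminus V(K)$ can have $a_2$ as its only neighbour in $V(K)$. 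Indeed, such a $v$ would be non-adjacent to $a_1$, yet, considering the arc of $\dG$ between $v$ and $a_2$, the instance of (N2) on $a_1\to b_1\to a_2\to v$ (when $a_2\to v$) or on $v\to a_2\to b_1\to a_1$ (when $v\to a_2$) -- both legitimate thanks to the symmetric edges $a_1\leftrightarrow b_1$ and $a_2\leftrightarrow b_1$ -- would force $a_1v\in E(G)$, a contradiction. Hence every vertex outside $V(K')$ keeps a neighbour in $V(K')$, so $K'$ is a dominating biclique strictly smaller than $K$: contradiction. Therefore $\overrightarrow{K}$ is acyclic, hence an acyclic-oriented digraph, and the theorem follows.

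The main obstacle is item (ii) of the last step: showing that the ``fully reciprocal'' $K_{2,2}$ produced by the directed $4$-cycle cannot coexist with a vertex dominated only by $a_2$, together with picking the right vertex of that $K_{2,2}$ to delete. Everything else is routine -- the existence of a dominating biclique (Theorem~\ref{thm:connected-typeA}), the reduction to a directed $4$-cycle, and the four uses of (N2). I also note that consistency of the orientation $\overrightarrow{\Gamma}$ is not actually needed for this argument (any orientation works); it is, however, what is assumed here and what becomes essential for Corollary~\ref{cor:consistent_orientation}.
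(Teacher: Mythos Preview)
Your argument is correct, but it takes a genuinely different route from the paper.  The paper does not argue about directed $4$-cycles at all: it invokes \cite[Theorem~4]{korchmaros2021structure}, which says that \emph{every} orientation of a bi-transitive bipartite digraph is acyclic.  Since a 2qBMG is bi-transitive bipartite, the whole orientation $\overrightarrow{\Gamma}$ is acyclic, and hence so is the induced subgraph on the vertex set of any dominating biclique $K$ supplied by Theorem~\ref{thm:connected-typeA}.  That is the entire proof in the paper.

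Your approach instead avoids the external acyclicity theorem and is self-contained: you pick a dominating biclique of minimum order, reduce to a directed $4$-cycle in $\overrightarrow{K}$, use four applications of (N2) in $\dG$ to turn that $4$-cycle into a fully reciprocal $K_{2,2}$, and then show that one of its vertices can be removed from $K$ while preserving domination, contradicting minimality.  The trade-off is clear: the paper's proof is much shorter and yields the stronger intermediate fact that all of $\overrightarrow{\Gamma}$ is acyclic (so \emph{every} dominating biclique works, not just a minimum one), whereas your argument needs no black box beyond (N2) and Theorem~\ref{thm:connected-typeA}.  Your closing remark that consistency of $\overrightarrow{\Gamma}$ is never used is accurate and applies equally to the paper's proof.
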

\begin{proof} 
By assumption, $\dG$ admits a consistent orientation $\overrightarrow{\Gamma}$. Since $\dG$ is a 2qBMG, $\dG$ is a bi-transitive bipartite digraph. Furthermore, from~\cite[Theorem 4]{korchmaros2021structure}, one has that every (possible) orientation of a bi-transitive bipartite digraph is acyclic-oriented. It follows that $\overrightarrow{\Gamma}$ is acyclic-oriented. On the other hand, from Theorem~\ref{thm:connected-typeA}, the underlying undirected graph $G$ of $\dG$ contains an induced subgraph $K$ that is a dominating biclique of $G$. Let $\overrightarrow{K}$ be the induced subgraph of $\overrightarrow{\Gamma}$ on the vertex-set of $K$. $\overrightarrow{K}$ has the desired property by construction. 
\end{proof}
The existence of a consistent orientation of a 2qBMG is ensured under a mild condition.
\begin{lemma}
\label{lem:consistent_orientation} 
Let $\dG$ be a 2qBMG satisfying the following condition:
\begin{itemize}
\item[($*$)] no two (or more) pairs of symmetric edges of $\dG$ have a common endpoint.
\end{itemize}
Then $\dG$ admits a consistent orientation $\overrightarrow{\Gamma}$.
\end{lemma}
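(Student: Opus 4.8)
The plan is to construct $\overrightarrow{\Gamma}$ by deciding, for each $2$-cycle of $\dG$, which of its two arcs to delete, and then to prove that a globally valid set of decisions exists. I would first record two consequences of $(*)$. Since no vertex is an endpoint of two symmetric pairs, the $2$-cycles of $\dG$ are pairwise vertex-disjoint, and every endpoint of a $2$-cycle is a singleton of the equivalence relation on $V(\dG)$: if $v\leftrightarrow z$ is a $2$-cycle and $w$ were equivalent to $v$ with $w\neq v$, then $z\in N^+(v)\cap N^-(v)=N^+(w)\cap N^-(w)$, so $w\leftrightarrow z$ is a second symmetric pair through $z$, contradicting $(*)$. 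Next, deleting one arc of a $2$-cycle $\{x,y\}$ removes exactly one element from $N^+(y)$, one from $N^-(x)$, and affects nothing else; hence under \emph{any} orientation the sets $N^\pm$ of a vertex that lies on no $2$-cycle are unchanged. Together these two facts show that every orientation preserves each equivalence class of $\dG$ of size $\geq 2$ and never merges two distinct such classes; so an orientation $\overrightarrow{\Gamma}$ fails to be consistent only by creating a \emph{new} equivalence, and any newly equivalent pair must contain a $2$-cycle endpoint.

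The core is then a local analysis of a single $2$-cycle $\{x,y\}$ (with $x,y$ of distinct colours). Keeping $x\to y$ (deleting $y\to x$) makes $N^+_{\overrightarrow{\Gamma}}(y)=N^+(y)\setminus\{x\}$ and $N^-_{\overrightarrow{\Gamma}}(x)=N^-(x)\setminus\{y\}$, so it produces a new equivalence with a vertex lying on no $2$-cycle precisely when such a vertex $p$ satisfies $N^+(p)=N^+(y)\setminus\{x\}$ and $N^-(p)=N^-(y)$, or such a vertex $q$ satisfies $N^+(q)=N^+(x)$ and $N^-(q)=N^-(x)\setminus\{y\}$; the symmetric statement governs keeping $y\to x$. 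I would then prove that the two orientations of a single $2$-cycle cannot both be dangerous in this sense: assuming they are, one extracts two ``near-twin'' vertices whose prescribed in- and out-neighbourhoods, combined with the arcs $x\to y$, $y\to x$ and the memberships they force (and with the fact that neither $x$ nor $y$ is equivalent to any vertex), violate one of (N1), (N2), (N3). This case check is the technical heart of the argument; property (N3) in particular controls the interaction of $N^+(p)$ with $N^+(y)$.

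For the global assembly, view the orientation of each $2$-cycle as a Boolean variable. By the previous step each $2$-cycle carries at most one ``unit'' constraint (one direction forbidden by a witness off every $2$-cycle). A witness that is itself an endpoint of another $2$-cycle links the orientations of two $2$-cycles; here I expect to show, again by deriving a contradiction with (N1)--(N3) in the contrary case, that the induced constraint always forces the two $2$-cycles to be oriented the same way with respect to the bipartition (an equality, never a parity condition) and that the unit and equality constraints are mutually consistent, so the whole constraint system is satisfiable. Any satisfying assignment yields a consistent orientation $\overrightarrow{\Gamma}$, as needed (and feeds Theorem~\ref{thm:odd-even}). I expect this last point---satisfiability of the constraint system---to be the main obstacle: a uniform rule such as ``orient every $2$-cycle towards a fixed colour class'' does not suffice, as one sees from the disjoint union of a small $2$-cycle-with-a-pendant gadget and its colour-reversed copy, so the argument must genuinely be component-by-component.
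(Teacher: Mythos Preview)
The paper's proof is a one-line appeal to an external result, \cite[Lemma~2.2]{korchmaros2021structure}, which (as invoked there) says that the existence of an inconsistent orientation forces two symmetric pairs to share a vertex; under $(*)$ this is impossible, and the lemma follows. Your route is completely different: a self-contained construction via a Boolean constraint system on the $2$-cycles.

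Your central claim---that the two orientations of a single $2$-cycle $\{x,y\}$ cannot both produce a new equivalence with a vertex lying on no $2$-cycle---is false. Take $V=\{x,y,p,p'\}$ with $x$ of one colour and $y,p,p'$ of the other, and arc set $\{xy,\,yx,\,xp,\,p'x\}$. This is a 2qBMG: (N1) is vacuous since $x$ is adjacent to every vertex of the opposite colour; the nonempty out-neighbourhoods among $y,p,p'$ are all equal to $\{x\}$, so (N3) holds; and every directed walk of length three closes up by an existing arc, giving (N2). Condition $(*)$ holds since $x\leftrightarrow y$ is the unique symmetric pair. But keeping $x\to y$ yields $N^+_{\overrightarrow{\Gamma}}(y)=\emptyset=N^+(p)$ and $N^-_{\overrightarrow{\Gamma}}(y)=\{x\}=N^-(p)$, while keeping $y\to x$ yields $N^+_{\overrightarrow{\Gamma}}(y)=\{x\}=N^+(p')$ and $N^-_{\overrightarrow{\Gamma}}(y)=\emptyset=N^-(p')$. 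Thus \emph{both} orientations create a new equivalence and neither is consistent. This does not merely break the ``technical heart'' case check you announce; it shows that no assignment whatsoever in your constraint system can succeed on this instance, so the approach cannot be rescued by a more careful global assembly. Either the notion of consistency actually needed downstream is only the forward implication (equivalent in $\dG$ $\Rightarrow$ equivalent in $\overrightarrow{\Gamma}$)---in which case your first two paragraphs already establish it for every orientation and the constraint system is superfluous---or the cited external lemma is carrying content that your self-contained argument does not capture.
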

\begin{proof} 
Clearly $\dG$ admits at least one orientation. From \cite[Lemma 2.2]{korchmaros2021structure}, if some orientation is not consistent, then two (or more) pairs of symmetric edges of $\dG$ have a common vertex. Thus, since $\dG$ satisfies condition $(*)$, $\dG$ admits a consistent orientation. 
\end{proof}
As a corollary of Theorem~\ref{thm:odd-even} and Lemma~\ref{lem:consistent_orientation}, we have the following result.
\begin{corollary}
\label{cor:consistent_orientation} 
Let $\dG$ be a 2qBMG 
satisfying condition $(*)$. Then there exists a consistent orientation $\overrightarrow{\Gamma}$ of $\dG$ with an induced acyclic-oriented subgraph $\overrightarrow{K}$ (which is not necessarily an induced subgraph of $\dG$) such that the 
underlying undirected graph $K$ of $\overrightarrow{K}$ is a dominating biclique of the underlying undirected graph $G$ of $\dG$. 
\end{corollary}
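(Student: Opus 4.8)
The plan is to chain the two preceding results, since the corollary is stated precisely as a consequence of Theorem~\ref{thm:odd-even} and Lemma~\ref{lem:consistent_orientation}. First I would invoke Lemma~\ref{lem:consistent_orientation}: because $\dG$ is a 2qBMG satisfying condition $(*)$, the lemma guarantees the existence of a consistent orientation $\overrightarrow{\Gamma}$ of $\dG$. This selects the orientation with which we will work.

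Next I would feed $\overrightarrow{\Gamma}$ into Theorem~\ref{thm:odd-even}. That theorem applies to any 2qBMG that admits a consistent orientation, and the conclusion of Lemma~\ref{lem:consistent_orientation} supplies exactly such an orientation; hence its hypothesis is met. Applying Theorem~\ref{thm:odd-even} produces an induced acyclic-oriented subgraph $\overrightarrow{K}$ of $\overrightarrow{\Gamma}$ whose underlying undirected graph $K$ is a dominating biclique of the underlying undirected graph $G$ of $\dG$; as in Theorem~\ref{thm:odd-even}, $\overrightarrow{K}$ need not itself be an induced subgraph of $\dG$. Since $\overrightarrow{\Gamma}$, $\overrightarrow{K}$, and $K$ are exactly the objects named in the statement of the corollary, the proof is complete.

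The only point to check is that the interfaces align: condition $(*)$ is consumed by Lemma~\ref{lem:consistent_orientation}, its conclusion (``$\dG$ admits a consistent orientation'') is verbatim the hypothesis of Theorem~\ref{thm:odd-even}, and the conclusion of Theorem~\ref{thm:odd-even} is verbatim the conclusion of the corollary. There is therefore no genuine obstacle here; the corollary is a pure composition of the two earlier statements, and the mathematical substance resides entirely in the proofs of Theorem~\ref{thm:P6free} (which underlies Theorem~\ref{thm:connected-typeA}), Theorem~\ref{thm:odd-even}, and Lemma~\ref{lem:consistent_orientation}.
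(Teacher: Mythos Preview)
Your proposal is correct and matches the paper's approach exactly: the corollary is stated in the paper without a separate proof, simply as an immediate consequence of Lemma~\ref{lem:consistent_orientation} followed by Theorem~\ref{thm:odd-even}, which is precisely the chaining you describe.
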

It may be noticed that acyclic-oriented digraphs have been
  characterized as odd-even digraphs; see~\cite[Theorem 3.4]{DGGS}.
  Theorem~\ref{thm:extra_forbidden} poses the following problem: How can
  the class of the underlying undirected 2qBMGs be characterized in terms
  of forbidden subgraphs. More precisely, does the class of
  $(P_6,C_6,\text{Sunlet}_4)$-free graphs coincide with the class of underlying undirected
  graphs of 2qBMGs, or are additional forbidden subgraphs still to be identified?

\section*{Acknowledgements}
\noindent
The authors thank Marc Hellmuth for insightful discussions and comments, and the referees for their
valuable comments and suggestions to improve the paper’s quality.
Funded by the German Research Foundation (DFG, STA 850/49-1 432974470).

\bibliographystyle{plainurl}
\bibliography{refs}

\end{document}